\title{STABLE CELL-CENTERED FINITE VOLUME DISCRETIZATION FOR BIOT
EQUATIONS}
\author{Jan Martin Nordbotten\footnotemark[2]}
\def\abs#1{\lvert #1\rvert}
\def\norm#1{\lVert #1\rVert}
\def\dblbr#1{[[#1]]}
\def\aver#1{\langle #1\rangle}
\newcommand*{\xib}{\boldsymbol{\xi}}
\newcommand*{\nub}{\nu}
\newcommand*{\pib}{\boldsymbol{\pi}}
\def\overkern#1#2#3{
 {} \mskip#1mu \overline{\mskip-#1mu #3 \mskip-#2mu} \mskip#2mu {}
}
\newcommand*{\bnabla}{\overkern11\nabla}
\newcommand*{\C}{\mathbb{C}}
\newcommand*{\R}{\mathbb{R}}
\newcommand*{\LBB}{\mathit{LBB}}
\newcommand*{\FV}{\mathit{FV}}
\newcommand*{\ch}{\mathit{ch}}
\newcommand*{\eb}{\boldsymbol{e}}
\newcommand*{\fb}{\boldsymbol{f}}
\newcommand*{\gb}{\boldsymbol{g}}
\newcommand*{\kb}{\boldsymbol{k}}
\newcommand*{\nb}{\boldsymbol{n}}
\newcommand*{\qb}{\boldsymbol{q}}
\newcommand*{\ub}{\boldsymbol{u}}
\newcommand*{\vb}{\boldsymbol{v}}
\newcommand*{\wb}{\boldsymbol{w}}
\newcommand*{\xb}{\boldsymbol{x}}
\newcommand*{\yb}{\boldsymbol{y}}
\newcommand*{\zb}{\boldsymbol{z}}
\newcommand*{\Hb}{\boldsymbol{H}}
\newcommand*{\Ib}{\boldsymbol{I}}
\newcommand*{\Ob}{\boldsymbol{0}}
\newcommand*{\Tb}{\boldsymbol{T}}
\newcommand*{\Cc}{\mathcal{C}}
\newcommand*{\Dc}{\mathcal{D}}
\newcommand*{\Fc}{\mathcal{F}}
\newcommand*{\Gc}{\mathcal{G}}
\newcommand*{\Hc}{\mathcal{H}}
\newcommand*{\Tc}{\mathcal{T}}
\newcommand*{\Vc}{\mathcal{V}}
\newcommand*{\Hbc}{\boldsymbol{\mathcal{H}}}
\newcommand*{\afr}{\boldsymbol{\mathfrak{a}}}
\newcommand*{\bfr}{\boldsymbol{\mathfrak{b}}}
\newcommand*{\Afr}{\mathfrak{A}}
\newcommand*{\Bfr}{\mathfrak{B}}
\newtheorem{comment}[theorem]{\normalfont\textit{Comment}}
\newtheorem{remark}[theorem]{Remark}
\newtheorem{statement}[theorem]{Statement}
\newtheorem{theo}{Theorem}
\newtheorem{condition}{Condition}
\begin{document}
\maketitle

\renewcommand{\thefootnote}{\fnsymbol{footnote}}

\footnotetext[2]{Department of Mathematics,
University of Bergen and
Department of Civil and Environmental Engineering,
Princeton University.}

\renewcommand{\thefootnote}{\arabic{footnote}}

\begin{abstract}
In this paper we discuss a new discretization for the Biot equations. The
discretization treats the coupled system of deformation and flow
directly, as opposed to combining discretizations for the two
separate sub-problems. The coupled discretization has the following
key properties, the combination of which is novel: 1) The variables
for the pressure and displacement are co-located, and are as sparse as
possible (e.g.\ one displacement vector and one scalar pressure per
cell center). 2) With locally computable restrictions on grid types,
the discretization is stable with respect to the limits of incompressible
fluid and small time-steps. 3) No artificial stabilization term has been
introduced. Furthermore, due to the finite volume structure embedded in
the discretization, explicit local expressions for both momentum-balancing
forces as well as mass-conservative fluid fluxes are available.

We prove stability of the proposed method with respect to all relevant
limits. Together with consistency, this proves convergence of the
method. Finally, we give numerical examples verifying both the analysis
and convergence of the method.
\end{abstract}

\begin{keywords}\end{keywords}

\begin{AMS}\end{AMS}

\pagestyle{myheadings}
\thispagestyle{plain}
\markboth{JAN MARTIN NORDBOTTEN}{CELL-CENTERED FINITE VOLUME
DISCRETIZATION FOR BIOT EQUATIONS}

\section{Introduction}\label{sec1}

Deformable porous media are becoming increasingly important in
applications. In particular, the emergence of strongly engineered
geological systems such as CO$_2$ storage \cite{nor11,rut10}, geothermal
energy \cite{tes06}, and shale-gas extraction all require analysis
of the coupling of fluid flow and deformation. Beyond the subsurface,
physiological processes are increasingly simulated exploiting the Biot
models~\cite{bad09}.

With this motivation, we consider the following model problem poroelastic
media~\cite{bio41}
\begin{subequations}\label{eq1.1}
\begin{alignat}{2}
&\nabla\cdot\pib=\fb_{\ub} &&\text{in }\Omega\label{eq1.1a}\\
&\pib=\C:\nabla\ub-\alpha p\Ib&&\text{in } \Omega\label{eq1.1b}\\
&\alpha\nabla\cdot\ub + \rho p+\tau \nabla\cdot\qb=f_p&\quad& \text{in } \Omega\label{eq1.1c}\\
&\qb=-\kb\nabla p&&\text{in } \Omega\label{eq1.1d}\\
&\ub=\gb_{\ub,D}&&\text{on } \Gamma_{\ub,D}\label{eq1.1e}\\
&\pib\cdot\nb=\gb_{\ub,N}&&\text{on } \Gamma_{\ub,N}\label{eq1.1f}\\
&p=g_{p,D}&&\text{on } \Gamma_{p,D}\label{eq1.1g}\\
&\qb\cdot\nb=g_{p,N}&&\text{on } \Gamma_{p,N}\label{eq1.1h}
\end{alignat}
\end{subequations}

Equations \eqref{eq1.1} arise from an implicit (backward Euler) time
discretization of the linear Biot's equations, and in this context
$\tau$ represents the time-step. The domain $\Omega$ is a bounded
connected polygonal subset of $\R^d$, with boundary $\partial\Omega
=\Gamma_D\cup\Gamma_N$. The notation adopted in equation~\eqref{eq1.1}
utilizes the variables vector displacement ($\ub$), scalar pressure
($p$), tensor Biot stress ($\pib$) and vector Darcy flux ($\qb$). The
spatially dependent material stiffness tensor is denoted $\C$, the Biot
coupling coefficient ($\alpha$), the fluid compressibility ($\rho$),
and the permeability ($\kb$). We denote source-terms by $f$ and boundary
conditions as $g$.

All parameters are in general positive (definite) functions of space,
and the discretization formulated herein considers arbitrary spatial
variability of the parameter functions. Nevertheless, we will for
the sake of simplicity and conciseness of presentation suppress this
spatial dependence in the current section and in the convergence
analysis given in Section~\ref{sec5}. Thus when parameters are treated
as constants, it should be understood in the sense as e.g.\ $\rho
=\inf_{\xb\in\Omega}\rho(\xb)$. For boundary conditions, we assume
that both $\Gamma_{\ub,D}$ and $\Gamma_{u,D}$ have positive measure: The
case of $\Gamma_{\ub,N}=\partial\Omega$ or $\Gamma_{p,N}=\partial\Omega$
can be accommodated, but the additional complications will not be
of sufficient interest to merit the additional notation and loss of
clarity in the presentation. Without loss of generality, we will by
subtracting any smooth function satisfying the boundary conditions
and correspondingly modifying the right-hand side, assign all boundary
conditions as homogeneous.

Under the stated conditions on parameters and boundary conditions, for
reasonable forcing functions equations \eqref{eq1.1} have a unique weak
solution $(\ub,p)\in(H^1(\Omega))^d\times H^1(\Omega)$ (throughout the
paper, we will implicitly consider the restriction of the function spaces
such that the Dirichlet boundary conditions are satisfied). Furthermore,
in the case where $\tau=0$, or equivalently, if the medium is impermeable
($\kb=0$), the compressible Stokes equations are recovered. This implies
that the regularity of the pressure is reduced, and we have a unique
weak solution $(\ub,p)\in(H^1(\Omega))^d\times L^2(\Omega)$. Finally,
if the incompressible case is considered, $\rho =0$, the pressure in
the Stokes equations is only defined up to a constant. This motivates
the following definition:

\begin{definition}\label{def1.1}
We denote a discretization of equations \eqref{eq1.1} as \emph{robust}
if there exists an estimate of the form
\[
\norm{\ub}_1+\tau \norm{p}_1+\rho \norm{p}_0+\abs{p}_0\le
C(\norm{\fb_{\ub}}+\norm{f_g})
\]
Where the constant $C$ is bounded independent of the
small parameters $\rho$ and $\tau$, and we denote by
$\abs{p}_0=\inf_{p_0}\norm{p-p_0}_0$.
\end{definition}

Our interest will be in obtaining a simple discretization which is robust,
which retains an explicit conservation principle for both momentum balance
\eqref{eq1.1a} and mass balance \eqref{eq1.1c} and which has a co-located cell-centered
data structure for both pressure and deformation. As our primary interest
lies in geological porous media, we will furthermore make no assumption
on the smoothness of the data beyond bounded measure.

The main difficulty associated with obtaining a robust discretization of
equations \eqref{eq1.1} arises from the three saddle-point systems
embodied in the formulation. In absence of the fluid pressure,
equations \eqref{eq1.1a} and \eqref{eq1.1b} represent a saddle-point system for the
solid sub-problem, and similarly, in absence of solid deformation
equations \eqref{eq1.1c} and \eqref{eq1.1d} represent a saddle-point system for the
fluid sub-problem. Finally, if mechanical stress and fluid fluxes are
eliminated, the displacement-pressure system is itself a saddle-point
system.

Disregard conservation properties, low order standard finite
elements are nevertheless not stable for the limit of incompressible
solid \cite{nag74,bre91}. Due to the saddle-point nature of the
displacement-pressure system, the displacement and pressure spaces must
furthermore be chosen as a Stokes-stable pair in order to satisfy a
Ladyzhenskaya--Babuska--Brezzi (LBB, or inf-sup) condition as $\rho,\tau
\to 0$ \cite{bre91}. For these reasons, stabilized formulations have
been considered to allow for arbitrary choices of finite element spaces
\cite{bad09}. While their stabilization can be motivated through various
formalisms, consistency is often lost. Similar problems appear for
co-located finite difference discretizations. While less literature is
available, a Brezzi--Pitkaranta type \cite{bre84} stabilization term has
also been analyzed in the context finite difference methods for the Biot
equations \cite{gas08}.

A natural option in order to obtain stable discretizations with retain
the explicit conservation structure is to construct a multi-field set of
inf-sup stable mixed finite element spaces. The fluid sub-problem can be
successfully discretized by mixed finite elements \cite{bre91}, and much
supporting literature exists for this problem. In contrast, the solid
sub-problem does not lend itself trivially to mixed finite elements, and
although much recent literature relates to this problem, simple low-order
element combinations appear to be impossible to devise \cite{arn02}. A
thorough numerical investigation of various combinations of mixed methods
for the full Biot equations has been reported \cite{hag12}. An alternative
to mixed finite elements (but similar in spirit) is to use staggered
grids for the mechanics and flow equations \cite{whe14,gas06}. The use of
staggered grids resolves stability issues, but leads to more restrictions
on admissible grids, and increases the technical difficulties associated
with constructing efficient solvers.

A novel cell-centered finite volume (FV) method for solid displacement
has recently been introduced as multi-point stress approximation (MPSA)
method by the author, for the purpose of consistent treatment of the
fluid and displacement equations \cite{nor14a}. This class of finite
volume methods, along with its counterpart for fluid flow, multi-point
flux approximation (MPFA) methods \cite{aav02}, is applicable to a wide
range of grids of relevance to industrial applications. In contrast to
control-volume finite element methods \cite{jas00}, these methods are
furthermore particularly well suited for problems with large material
contrasts \cite{eig05}, as are frequently seen in subsurface flows. The
MPSA and MPFA methods have recently been combined to solve Biot's
equations \cite{nor14b}. While that paper shows applications including
fractured media with complex geometries, the resulting discretization
is not robust in the limit of small timesteps $\tau$, and no theoretical
analysis is provided for the coupled scheme.

Theoretical properties for MPFA methods has long been
established by exploiting links to mixed finite element methods
\cite{kla06,whe12}. However, due to the lack of simple finite element
methods for elasticity and Biot's equations, of relevance for the
current work is the analysis within the hybrid finite volume framework
\cite{eym06}, and the recent analysis of the MPFA \cite{age10} and MPSA
methods \cite{nor15}.

Herein, we will expand on the finite volume framework as developed
in \cite{nor15} to establish a coupled discretization for the Biot
equations directly. This approach leads us to a new discretization,
which we refer to as MPFA/MPSA-FV. The discretization has the following
properties: A) The discretization of the fluid and mechanical sub-problem
are identical to the decoupled finite volume discretizations. B) When
local variables are eliminated by static condensation (as is typical for
finite volume methods), the resulting system of equations is in terms of
cell-center displacement and pressure only. C) We show that the co-located
discretization is naturally stable in the sense of Definition~\ref{def1.1}
without addition of any artificial stabilization term or stabilization
parameter. D) The discretization is consistent with the variational
finite volume formulation of the Biot system.

We note at this moment that Definition~\ref{def1.1} is not sufficient
to study the property of numerical locking for nearly incompressible
mechanical sub-problem. In order to analyze locking, a term needs
to be introduced accounting for the divergence of displacement,
$\norm{\nabla\cdot\ub}_0$. We choose not to include this in our analysis
for four reasons: a) Fully incompressible materials are of little interest
in the setting of Biot equations, since the coupling term is exactly
the compression. b) The mechanical discretization obtained herein is in
itself locking free on most grids \cite{nor15}. c) For the Biot equations,
locking-free schemes can be achieved at no additional complexity by
introducing a solid pressure in the formulation \cite{lew98}, and d)
The analysis becomes much more complex, both in practical terms and
in the sense of notation. Indeed, to illustrate points a) and d)
we note that for incompressible materials, the left-hand side of
equation \eqref{eq1.1c} is zero in the limit of $(\rho,\tau)\to 0$, such that
no regularity of pressure is expected in this limit. This issue can be
resolved by observing that within the context of a time discretization,
$f_b=O(\max(\rho,\tau,\nabla\cdot \ub))$, however this indicates some
of the technical issues arising in the analysis.

We structure the rest of the manuscript as follows: In the next
section we will recall a suitable discrete functional framework. In
Section~\ref{sec3} we will define our method within the framework of
variational finite volume methods. In Section~\ref{sec4}, we identify
the cell-centered discretizations through local static condensation. The
main result is stated in Section~\ref{sec5}, where we show stability
and convergence of the coupled discretization. In Section~\ref{sec6},
we provide numerical results which underscore the robustness claims in
all relevant parameter regimes, and provides numerical convergence rates
for smooth problems. Finally, Section~\ref{sec7} concludes the paper.

\section{Discrete functional framework}\label{sec2}

In this section we give the definition of our finite volume mesh
and discrete variables. The setting will be identical to the discrete
framework used for pure elasticity in \cite{nor15}. This mesh description
and discrete framework expands on those of \cite{age10}, which again
generalizes \cite{eym06}. As this section contains no novel material,
the presentation will be as brief as possible, and the reader is referred
to the references for further details.

\subsection{Finite volume mesh}\label{sec2.1}

We denote a finite volume mesh by the triplet $\Dc=(\Tc,\Fc,\Vc)$,
representing the mesh Tessellation, Faces, and Vertexes, such that:
\begin{itemize}
\item $\Tc$ is a non-overlapping partition of the domain
$\Omega$. Furthermore, let $m_K$ denote the $d$-dimensional measure of
$K\in \Tc$.
\item $\Fc$ is a set of faces of the partitioning $\Tc$. We consider only
cases where elements $\sigma\in\Fc$ are subsets of $d-1$ dimensional
hyper-planes of $\R^d$, and all elements $\sigma \in\Fc$ we associate
the $d-1$ dimensional measure $m_\sigma$. Naturally, the faces must
be compatible with the mesh, such that for all $K\in\Tc$ there exists
a subset $\Fc_K\subset \Fc$ such that $\partial K=\bigcup_{\sigma
\in\Fc_K}\sigma$.
\item $\Vc$ is a set of vertexes of the partitioning $\Tc$. Thus for
any $d$ faces $\sigma_i\in\Fc$, either their intersection is empty or
$\bigcap_i\sigma_i =s\in\Vc$.
\end{itemize}

Note that in the above (and throughout the manuscript), we abuse notation
by referring to the object and the index by the same notation. E.g., we
will by $K\in\Tc$ allow $K$ to denote the index, as in $\Fc_K$, but also
the actual subdomain of $\Omega$, such that the expressions $\partial K$
and $m_K=\int_K d\xb$ are meaningful.

Additionally, we state the following useful subsets of the mesh triplet,
which allows us to efficiently sum over neighboring cells, faces or
vertices:
\begin{itemize}
\item For each cell $K\in\Tc$, in addition to the faces $\Fc_K$, we
denote the vertexes of $K$ by $\Vc_K$. We will associate with each vertex
$s\in\Vc_K$ a subcell of $K$, identified by $(K,s)$, with a volume $m_K^s$
such that $\sum_{s\in\Vc_K}m_K^s =m_K$.
\item For each face $\sigma \in\Fc$, we denote the neighboring
cells $\Tc_\sigma$ and its vertex for $\Vc_\sigma$. Note that for all
internal faces $\Tc_\sigma$ will contain exactly two elements, while it
contains a single element when $\sigma\subset\partial\Omega$. We will
associate with each corner $s\in\Vc_\sigma$ a subface of $\sigma$,
identified by $(s,\sigma)$, with an area $m_\sigma^s$ such that
$\sum_{s\in\Vc_\sigma}m_\sigma^s =m_\sigma$.
\item For each vertex $s\in\Vc$, we denote the adjacent cells by $\Tc_s$
and the adjacent faces by $\Fc_s$.
\end{itemize}

We associate for each element $K\in\Tc$ a unique point (cell
center) $\xb_K\in K$ such that $K$ is star-shaped with respect to
$\xb_K$, and we denote the diameter of $K$ by $d_K$. Furthermore,
we denote the distance between cell centers $\xb_K$ and $\xb_L$
as $d_{K,L}=\abs{\xb_K-\xb_L}$. The grid diameter is denoted
$h=\max_{K\in\Tc}d_K$.

We associate with each face $\sigma$ its outward normal vector with
respect to the cell $K\in\Tc_\sigma$ as $\nb_{K,\sigma}$, and the
Euclidian distance to the cell center $d_K^\sigma$. For each subface
$(s,\sigma)$ we denote the subface center as $\xb_\sigma^s$ and a set of
quadrature points on $(s,\sigma)$ as $\Gc_\sigma^s$. For each quadrature
point $\beta\in\Gc_\sigma^s$ we associate the position $\xb_\beta$ and
weight $\omega_\beta$. In general, we will choose sufficient quadrature
points for exact integration of second-order polynomials, however for
simplex grids it is advantageous to choose only a single quadrature point.

The above definition covers all 2D grids of interest, and quite general 3D
grids. However, the definition disallows 3D grids with curved surfaces
(e.g.\ distorted cubes). A more general formulation appears to be
practicable \cite{age10}, but would come at the expense of additional
notation and analysis.

Regularity assumptions on the discretization $\Dc$ are detailed elsewhere
(see e.g.\ \cite{eym06}), we will in the interest of simplicity of
exposition henceforth assume that the classical grid regularity parameters
(grid skewness, internal cell angles, and coordination number of vertexes)
do not deteriorate.

\subsection{Discrete variables and norms}\label{sec2.2}

We detail the three discrete spaces used in our analysis below. They
represent, respectively, the space of cell variables $\Hc_\Tc$, cell and
discontinuous face variables $\Hc_\Dc$, and finally cell and continuous
face variables $\Hc_\Cc$.

The following discrete space is classical \cite{eym06}, and is the space
where the final cell-centered discretization will take its values.

\begin{definition}\label{def2.1}
For the mesh $\Tc$, let $\Hc_\Tc(\Omega)\subset L^2 (\Omega)$ be the
set of piece-wise constant functions on the cells of the mesh $\Tc$.
\end{definition}

As with the dual interpretation of the elements $K\in\Tc$, the space
$\Hc_\Tc (\Omega)$ is isomorphic to the space of discrete variables
associated with the cell-center points $\xb_K$. There should also be
no cause for confusion in the following when we also work with the
vector-valued spaces, then denoted by bold $\Hbc_\Tc$.

For the space $\Hc_\Tc$ we introduce the inner product
\[
[u,v]_\Tc=\sum_{K\in\Tc}\sum_{\sigma\in\Fc_K}\frac{m_\sigma}{d_{K,\sigma}}(\gamma_\sigma
u-u_K) (\gamma_\sigma v-v_K)
\]
and its induced norm
\[
\norm{u}_\Tc=([u,u]_\Tc)^{1/2}
\]
Here the operator $\gamma_\sigma u$ interpolates the piecewise constant
values of $\Hc_\Tc$ onto the faces of the mesh, weighted by the distances
$d_{K,\sigma}$.
\[
\gamma_\sigma
u=\Bigl(\sum_{K\in\Tc_\sigma}\frac{u_K}{d_{K,\sigma}}\Bigr)/\Bigl(\sum_{K\in\Tc_\sigma}d_{K,\sigma}^{-1}\Bigr)
\quad\text{for all }\sigma\in\Fc;\ \sigma\notin\Gamma_D
\]
For Dirichlet boundary edges, $\sigma \in\Gamma_D$, we take $\gamma_\sigma
u=0$. Equivalently, the operator $\gamma_\sigma u$ can be defined as the
value which minimizes the definition of the norm $\norm{u}_\Tc$. The
norm defined above is naturally identified as an $H^1$ type norm for
the space. We will furthermore need the discrete $L^2$ inner product
for $\Hc_\Tc$
\[
[u,v]_{\Tc,0}=\sum_{K\in\Tc}m_K u_K v_K
\]
and its induced norm
\[
\norm{u}_{\Tc,0}=([u,u]_{\Tc,0})^{1/2}
\]

\begin{definition}\label{def2.2}
For the mesh triplet $\Dc$, let $\Hc_\Dc$ be the set of real
scalars $(u_K,u_{K,s}^{\sigma,\beta})$, for all $K\in\Tc$, for all
$(s,\sigma)\in\Vc_K\times \Fc_K$ and for all $\beta\in\Gc_\sigma^s$.
\end{definition}

The space $\Hc_\Dc$ thus contains one unknown per cell, in addition to
multiple unknowns on each interior sub-face. This space was introduced
in order to control the space of rigid body motions when discretizing
the mechanical sub-problem \cite{nor14a}. As above, we will immediately
take $u_{K,s}^{\sigma,\beta}=0$ for all $\sigma \in\Gamma_D$.

We denote for all internal subfaces
$\dblbr{u}_s^{\sigma,\beta}=u_{R,s}^{\sigma,\beta}-u_{L,s}^{\sigma,\beta}$
for $u\in\Hc_\Dc$ and $\Tc_\sigma =\{R,L\}$ as the
jump in the discrete function $u$ across that edge. We
will also need a notion of an average face value, and
we denote similarly for all internal subfaces $\aver{u}_s^\sigma
=\sum_{\beta\in\Gc_s^\sigma}\omega_\beta\frac{u_{R,s}^{\sigma,\beta}+u_{L,s}^{\sigma,\beta}}2$.
For boundary edges $\sigma \in\partial\Omega$
only one function value is available and we define
$\dblbr{u}_s^{\sigma,\beta}=0$ and $\aver{u}_s^\sigma
=\frac1{m_s^\sigma}\sum_{\beta\in\Gc_s^\sigma}\omega_\beta
u_{R,s}^{\sigma,\beta}$. We now associate with the space $\Hc_\Dc$
the inner product
\begin{align*}
[u,v]_\Dc=&\sum_{K\in\Tc}\sum_{s\in\Vc_K}\sum_{\sigma
\in\Fc_s\cap\Fc_K}\frac{m_K^s}{d_{K,\sigma}^2}
(u_K-\aver{u}_s^\sigma)(v_K-\aver{v}_s^\sigma)\\
&+\frac{m_K^s}{d_{K,\sigma}^2}\frac1{m_s^\sigma}
\sum_{\beta\in\Gc_s^\sigma}\omega_\beta \dblbr{u}_s^{\sigma,\beta}
\dblbr{v}_s^{\sigma,\beta}
\end{align*}
and the induced norm
\[
\norm{u}_\Dc=[u,u]_\Dc^{1/2}
\]

\begin{definition}\label{def2.3}
For the mesh triplet $\Dc$, let $\Hc_\Cc$ be the set of real
scalars $(u_K,u_s^\sigma)$, for all $K\in\Tc$ and for all
$(s,\sigma)\in\Vc_K\times \Fc_K$.
\end{definition}

This last space is essential for obtaining the finite volume structure
of the scheme~\cite{age10}.{\emergencystretch5pt\par}

By introducing the natural interpolation operator $\Pi_\Dc\colon\Hc_\Cc\to
\Hc_\Dc$ as $(\Pi_\Dc u)_K=\nobreak u_K$; $(\Pi_\Dc u)_{K,s}^{\sigma,\beta}=u_s^\sigma$
for all $K\in\Tc$ and for all $(s,\sigma)\in\Vc_K\times \Fc_K$, we can
immediately define the inner product
\[
[u,v]_\Cc=[\Pi_\Dc u,\Pi_\Dc v]_\Dc
\]
and the induced norm
\[
\norm{u}_\Cc=[u,u]_\Cc^{1/2}
\]

In addition to the interpolation operator defined above, we shall need
a few more operators to move between function spaces.
\begin{itemize}
\item Let the operator $\Pi_\Tc\colon\Hc_\Dc\to \Hc_\Tc$ be defined as
$(\Pi_\Tc u)(x)=u_K$ for all $x\in K$ and $K\in\Tc$. Furthermore, as there
should be no reason for confusion we also define $\Pi_\Tc\colon\Hc_\Cc\to
\Hc_\Tc$ with as $(\Pi_\Tc u)(x)=(\Pi_\Tc \Pi_\Dc u)(x)=u_K$ for all
$x\in K$ and $K\in\Tc$. Finally, we also write $\Pi_\Tc\colon C(\Omega)\to
\Hc_\Tc$ as $(\Pi_\Tc u)(x)=u(x_K)$ for all $x\in K$ and $K\in\Tc$.
\item Let the operator $\Pi_\Cc\colon\Hc_\Dc\to \Hc_\Cc$ be defined as
$(\Pi_\Cc u)_K=u_K$; $(\Pi_\Cc u)_s^\sigma =\aver{u}_s^\sigma$ for all
$K\in\Tc$ and for all $(s,\sigma)\in\Vc_K\times \Fc_K$.
\end{itemize}

The spaces defined above satisfy the following inequalities.
\begin{itemize}
\item Discrete Poincar\'e inequality \cite{eym06}: For all $u\in\Hc_\Tc$
\[
\norm{u}_{\Tc,0}\le C_P \norm{u}_\Tc
\]
\item Inverse inequality \cite{eym06}: For all $u\in\Hc_\Tc$
\[
\norm{u}_{\Tc,0}\ge\sqrt{d}h\norm{u}_\Tc
\]
\item Relationship between $\Hc_\Tc$ and $\Hc_\Cc$ \cite{age10}: For
all $u\in\Hc_\Cc$
\[
\norm{\Pi_\Tc u}_\Tc\le \sqrt{d} \norm{u}_\Cc
\]
\item Relationship between $\Hc_\Cc$ and $\Hc_\Dc$ (trivial from
definitions): For all $u\in\Hc_\Dc$
\[
\norm{\Pi_\Cc u}_\Cc\le \norm{u}_\Dc
\]
\end{itemize}

Finally, we introduce local spaces $\Hc_{\Dc,s}\subset \Hc_\Dc$
for each $s\in\Vc$ defined such that $u\in\Hc_{\Dc,s}$ if
$u_{K,t}^{\sigma,\beta}=0$ for all $t\in\Vc$ with $t\neq s$ and $u_K=0$
if $s\notin\Vc_K$. Similarly, $\Hc_{\Tc,s}$ and $\Hc_{\Cc,s}$ are defined
through the operators defined above. The local spaces have the natural
(semi-)norms, which to be precise are given for all $u\in\Hc_\Dc$ as
\[
\norm{u}_{\Dc,s}^2=\sum_{K\in\Tc_s}\sum_{\sigma
\in\Fc_s\cap\Fc_K}\frac{m_K^s}{d_{K,\sigma}^2}
(u_K-\aver{u}_s^\sigma)^2+\frac{m_K^s}{d_{K,\sigma}^2} \frac1{m_s^\sigma}
\sum_{\beta\in\Gc_s^\sigma}\omega_\beta (\dblbr{u}_s^{\sigma,\beta})^2
\]
And for all $u\in\Hc_\Tc$ as
\[
\norm{u}_{\Tc,s}^2=\sum_{K\in\Tc_s}\sum_{\sigma
\in\Fc_s\cap\Fc_K}\frac{m_\sigma^s}{d_{K,\sigma}}(\gamma_\sigma u-u_K)^2
\]
Such that both
\[
\norm{u}_\Dc^2=\sum_{s\in\Vc}\norm{u}_{\Dc,s}^2 \quad\text{and}\quad
\norm{u}_\Tc^2=\sum_{s\in\Vc}\norm{u}_{\Tc,s}^2
\]

\section{Discrete mixed variational FV discretizations for
Biot}\label{sec3}

In this section, we will utilize the spaces defined in Section~\ref{sec2}
to establish a cell-centered finite volume method for Biot's
equations. The approach builds on the discrete mixed variational
formulation of existing methods for the pressure (cf.\ \cite{age10}
and \cite{nor15}) and displacement equations \cite{nor15}. The key
novel aspects arise due to the coupling terms arising in equations
\eqref{eq1.1}. The careful treatment of these terms will lead to a
naturally stable discretization, which improves on the method presented
in \cite{nor14b}. These issues will be crucial in the analysis, and
further highlighted in the numerical examples.

Our approach will exploit a discrete variational formalism for the
Biot problem, thus we introduce the variational form of equations
\eqref{eq1.1}: Find $(\ub,p)\in\Hb^1\times H^1$ such that
\begin{subequations}\label{eq3.1}
\begin{alignat}{2}
&(\C:\nabla\ub,\nabla\vb)-(\alpha p,\nabla\cdot\vb)=-(\fb_{\ub},\vb)&\quad&
\text{for all }\vb\in\Hb^1\label{eq3.1a}\\
&{-}(\nabla\cdot\ub,\alpha r)-(\rho p,r)-\tau (k\nabla p,\nabla r)=-(f_p,r)&\quad&
\text{for all }r\in H^1\label{eq3.1b}
\end{alignat}
\end{subequations}
While this form of the equation hides the stress and flux expressions,
it will be implied throughout (and made explicit in Section~\ref{sec3.2})
that the methods considered herein allow for explicit and local extraction
of mass conservative fluid fluxes, as well as momentum conserving surface
tractions. It is also important to note that the integration by parts
is essential for the coupling term in equation~\eqref{eq3.1a}, in order for
the system to still be well defined in the limit $\tau \to 0$, when
regularity of $p$ is reduced.

\subsection{Method formulation}\label{sec3.1}

Will formulate our discretizations on a discrete variational form,
based on equations \eqref{eq1.1}. To obtain both a finite volume
structure as well as a consistent method, we follow previous work
\cite{age08,nor14a,age10}, and introduce two notions of a discrete
gradient. First, we note that the discrete divergence provides by duality
a definition of a gradient, which thus exactly preserves the finite
volume structure of the governing equations for each cell $K\in\Tc$.

\begin{definition}\label{def3.1}
For each $K\in\Tc$ and each $s\in\Vc_K$ we define the \emph{finite volume
gradient} for all $\ub\in\Hbc_\Cc$:
\begin{equation}
(\widetilde{\nabla}\ub)_K^s=\frac1{m_K^s} \sum_{\sigma
\in\Fc_K\cap\Fc_s}m_\sigma^s (\aver{\ub}_{K,s}^\sigma
-\ub_K)\otimes\nb_{K,\sigma}
\label{eq3.2}
\end{equation}
\end{definition}

For the definition to make sense, we need to specify the averaging
notation in the natural way, that is to say $\aver{\ub}_{K,s}^\sigma
=\frac1{m_s^\sigma} \sum_{\beta\in\Gc_s^\sigma}\omega_\beta
u_{K,s}^{\sigma,\beta}$. We comment that in previous work, the finite
volume gradient has been referred to as the ``convergent gradient''
\cite{age10}. The finite volume gradient does not enjoy strong convergence
properties, and thus a notion of a gradient which is exact for locally
multi-linear discrete functions is needed.

\begin{definition}\label{def3.2}
For each $K\in\Tc$ and each $s\in\Vc_K$ we define the \emph{consistent
gradient} for all $\ub\in\Hbc_\Dc$:
\begin{equation}
(\bnabla\ub)_K^s=\sum_{\sigma \in\Fc_K\cap\Fc_s}(\aver{\ub}_{K,s}^\sigma
-\ub_K)\otimes\gb_{K,\sigma}^s
\label{eq3.3}
\end{equation}
\end{definition}

In order to satisfy the desired consistency property that
$(\bnabla\ub)_K^s$ is exact for linear displacements, the vectors
$\gb_{K,\sigma}^s$ must satisfy the system of equations:
\begin{equation}
\Ib_2=(\bnabla\xb)_K^s=\sum_{\sigma
\in\Fc_K\cap\Fc_s}(\aver{\xb}_{K,s}^\sigma -\xb_K)\otimes\gb_{K,\sigma}^s
\label{eq3.4}
\end{equation}
Here $\Ib_2$ is the $d$-dimensional second-order identity tensor. The
vectors $\gb_{K,\sigma}^s$ are unique for the grids considered herein,
but may be non-unique for more general 3D grids \cite{age10}.

For both discrete gradients, the corresponding divergence is obtained
way by either taking the trace of the gradient, or equivalently by
replacing the outer product by a dot product. Note that the consistent
and finite volume gradients are defined on the two distinct discrete
spaces $\Hbc_\Dc$ and $\Hbc_\Cc$, respectively.

A consistent finite volume formulation is now obtained by using the
finite volume gradient for the test functions and the consistent
gradient otherwise. With this in mind, we introduce the following
bilinear forms. For all $(\ub,\vb)\in\Hbc_\Dc\times \Hbc_\Cc$ and
$(p,r)\in\Hc_\Dc\times \Hc_\Cc$
\begin{align}
a_\Dc
(\ub,\vb)&=\sum_{K\in\Tc}\sum_{s\in\Vc_K}m_K^s\bigl(\C_K:(\bnabla\ub)_K^s:(\widetilde{\nabla}\vb)_K^s\bigr)
\label{eq3.5}\\
c_\Dc (p,r)&=\sum_{K\in\Tc}\sum_{s\in\Vc_K}m_K^s \bigl(k_K (\bnabla p)_K^s\cdot
(\widetilde{\nabla}r)_K^s\bigr)
\label{eq3.6}
\end{align}
With the coupling terms defined for all
\begin{align}
b_{\Dc,1} (\ub,r)&=-\sum_{K\in\Tc}\alpha_K (\Pi_\Tc r)_K
\sum_{s\in\Vc_K}m_K^s (\bnabla \cdot \ub)_K^s
\label{eq3.7}\\
b_{\Dc,2}^T (p,\vb)&=-\sum_{K\in\Tc}\alpha_K (\Pi_\Tc p)_K
\sum_{s\in\Vc_K}m_K^s (\widetilde{\nabla}\cdot \vb)_K^s
\label{eq3.8}
\end{align}

Note that the coupling terms are not in general transposes, due to the
different discrete differential operators. Thus the transpose notation
is used to indicate that the terms have a familiar structure.

To close the system, we need to associate the degrees of freedom in
$\Hc_\Dc$ with an appropriate polynomial order, and ensure a suitable
degree of continuity \cite{nor15}. Following the general paradigm for
MPFA and MPSA methods \cite{aav02,nor14a}, we require the solution to
be locally linear on each subcell associated with $(K,s)\in\Tc\times
\Vc_K$, and consistent with the gradient $\bnabla$. This is expressed
through a bilinear form defined for all $((\ub,p),\wb)\in(\Hbc_\Dc\times
\Hc_\Dc)\times (\Hbc_\Dc\times \Hc_\Dc)$ such that
\begin{multline}
d_\Dc
((\ub,p),\wb)=\sum_{K\in\Tc}\sum_{s\in\Vc_K}\sum_{\sigma\in\Fc_s}\sum_{\beta\in\Gc_\sigma^s}
\bigl((\ub,p)_{K,s}^{\sigma,\beta}-(\ub,p)_K-(\bnabla (\ub,p))_K^s\cdot
(\xb_\beta-\xb_K)\bigr)\\[-1ex]
\hskip5cm\cdot \bigl(\wb_{K,s}^{\sigma,\beta}-\wb_K-(\bnabla\wb)_K^s\cdot
(\xb_\beta-\xb_K)\bigr)
\label{eq3.9}
\end{multline}

Finally, weak continuity is enforced by minimizing jumps at quadrature
points for both the pressure and displacement, yielding the symmetric
bilinear form
\begin{equation}
e_\Dc \bigl((\ub,p),\wb\bigr)=\sum_{s\in\Vc}\sum_{\sigma
\in\Fc_s}\frac{\nub_s^\sigma}{m_s^\sigma}
\sum_{\beta\in\Gc_\sigma^s}\omega_\beta\dblbr{(\ub,p)}_s^{\sigma,\beta}\cdot\dblbr{\wb}_s^{\sigma,\beta}
\label{eq3.10}
\end{equation}

The positive weights $\nub_s^\sigma$ can in principle be chosen
arbitrarily, however a weighted harmonic mean of the constitutive
functions of the nearby cells $\Tc_\sigma$ seems beneficial in practice
\cite{nor14a}. The full discrete problem is a constrained miminization
problem, whose solution satisfies the following mixed variational system:
Find $((\ub_\Dc,p_\Dc),(\yb_\Cc,y_\Cc),\yb_\Dc)\in(\Hbc_\Dc\times \Hc_\Dc)\times
(\Hbc_\Cc\times \Hc_\Cc)\times (\Hbc_\Dc\times \Hc_\Dc)$ such that the
physical constraints
{\emergencystretch6pt
\begin{alignat}{2}
&a_\Dc (\ub_\Dc,\vb)+b_{\Dc,2}^T (p_\Dc,\vb)=-\int_\Omega \fb_{\ub}\cdot
\Pi_\Tc\vb\,d\xb&\quad&
\text{for all }\vb\in\Hbc_\Cc\label{eq3.11}\\
&
b_{\Dc,1} (\ub_\Dc,r)-[\rho_\Tc \Pi_\Tc p_\Dc,\Pi_\Tc r]_{\Tc,0}
\label{eq3.12}\\
&
\phantom{b_{\Dc,1} (\ub_\Dc,r)}
-\tau c_\Dc
(p_\Dc,r)=
-\int_\Omega f_p\cdot \Pi_\Tc r \,d\xb
&\quad&\text{for all }r\in\Hc_\Cc
\notag
\end{alignat}
}%
And the piece-wise linear approximation hold
\begin{equation}
d_\Dc \bigl((\ub_\Dc,p_\Dc),\wb\bigr)=0\quad\text{for all }\wb\in(\Hbc_\Dc\times
\Hc_\Dc)
\label{eq3.13}
\end{equation}
Subject to the solution constrained minimization
\begin{multline}\label{eq3.14}
\begin{aligned}
e_\Dc \bigl((\ub_\Dc,p_\Dc),(\wb,w)\bigr)&+a_\Dc (\wb,\yb_\Cc)+b_{\Dc,1} (\wb,y_\Cc)
+b_{\Dc,2}^T (w,\yb_\Cc)\\ 
&-[\rho_\Tc \Pi_\Tc w_\Dc,\Pi_\Tc y_\Cc]_{\Tc,0}
-\tau c(w,y_\Cc)+d_\Dc \bigl((\wb,w),\yb_\Dc\bigr)=0
\end{aligned}\\
\text{for all }(\wb,w)\in(\Hbc_\Dc\times \Hc_\Dc)
\end{multline}

In equation~\eqref{eq3.12} and \eqref{eq3.14}, we have introduced the
shorthand $\rho_\Tc=\Pi_\Tc \rho$. The components $\yb_\Cc$ and $\yb_\Dc$
are simply Lagrange multipliers for the constrained minimization problem,
and will not be of further interest. After local static condensation
(as made explicit below), they will not appear in the final cell-centered
method.

Equations \eqref{eq3.11}--\eqref{eq3.14} allows us to define the discrete
mixed variational formulation considered herein.

\begin{definition}\label{def3.3}
We refer to equations \eqref{eq3.11}--\eqref{eq3.14} as the
\emph{discrete mixed variational finite volume formulation for Biot's
equations}.
\end{definition}

{\makeatletter
\def\@begintheorem#1#2{\par\bgroup{{\itshape #1}\ #2. }\ignorespaces}
\makeatother
\begin{comment}\label{com3.4}
We will throughout the manuscript discuss the general setting where
$\bnabla\neq\widetilde{\nabla}$. However, it is well known that in the
special case of simplex grids, a symmetric method can be obtained by using
a single quadrature point (see e.g.\ \cite{eig05,age10,nor14a}). All
the results for the general setting applies to this case, with some
simplifications which will be noted when relevant.
\end{comment}
}

\subsection{Finite volume structure}\label{sec3.2}

In absence of the coupling terms (e.g.\ with $\alpha =0$ in the
bilinear form $b_\Dc$), the discrete mixed variational finite volume
formulation as given in equations \eqref{eq3.11}--\eqref{eq3.14}
is identical to the discretizations for elasticity and flow analyzed
previously \cite{nor15}. Furthermore, if the set of Gauss quadrature
points $\Gc_s^\sigma$ is reduced to a single point per subinterface
$(s,\sigma)$, the discretization for the flow problem reduces to the
well-known MPFA finite volume scheme \cite{age10,aav02}.

The finite volume structure arises due to the definition of the finite
volume gradients $\widetilde{\nabla}$. Indeed, we see that (with
reference to a canonical basis for $\Hc_\Cc$), the degrees of freedom
associated with cell centers imply that e.g.\ equation~\eqref{eq3.12}
can be equivalently re-written as:
\begin{multline}
\tau \sum_{\sigma \in\Fc_K}m_\sigma q_K^\sigma (p_\Dc)=\int_Kf_p\,d\xb
-\sum_{s\in\Vc_K}[m_K^s \alpha_K (\bnabla \cdot\ub)_K^s-\rho_K m_K p_K]
\\
\text{for all } K\in\Tc;
\label{eq3.15}
\end{multline}
Where the normal fluxes $q_K^\sigma$ are defined as
\begin{equation}
q_K^\sigma (p_\Dc)=-\biggl[\frac1{m_\sigma} \sum_{s\in\Vc_K}m_\sigma^s \kb_K
(\bnabla p)_K^s\biggr]\cdot \nb_{K,\sigma}
\label{eq3.16}
\end{equation}
Furthermore, the degrees of freedom associated with subface
variables in $\Hc_\Cc$, imply that for all internal faces $\sigma$,
equation~\eqref{eq3.12} reduces to (for $\{K,L\}=\Tc_\sigma$)
\begin{equation}
q_K^\sigma (p_\Dc)=-q_L^\sigma (p_\Dc)
\label{eq3.17}
\end{equation}
Equations \eqref{eq3.15} and \eqref{eq3.17} represents the conservation
structure of the scheme, while equation~\eqref{eq3.16} provides an
explicit local expression for the fluid normal flux. Together these
features identify the scheme as a finite volume method.

The finite volume structure arises in the same sense for the mechanical
sub-problem. First we introduce the notion of traction $\Tb(\nb)$,
which is the force on an internal surface with normal vector $\nb$,
and is related to the stress as $\Tb(\nb)=\pib\cdot\nb$. In
the continuous setting, it is the balance of tractions which
leads to equation~\eqref{eq1.1} \cite{tem00}. Now we note that
equation~\eqref{eq3.11} can be rewritten (with $\vb\in\Hbc_\Tc$) as
\begin{equation}
\sum_{\sigma \in\Fc_K}m_\sigma \Tb_K^\sigma (\ub_\Dc,p_\Dc;\nb_{K,\sigma})
=\int_K\fb_{\ub} \,d\xb
\quad\text{for all } K\in\Tc;
\label{eq3.18}
\end{equation}
Here again we note the explicit definition of the traction $\Tb_K^\sigma$
as
\begin{equation}
\Tb_K^\sigma (\ub_\Dc,p_\Dc;\nb_{K,\sigma})
=\biggl[\frac1{m_\sigma}\sum_{s\in\Vc_K}m_\sigma^s
\bigl(\C_K:(\bnabla\ub)_K^s-\alpha_K p_K\Ib_2\bigr)\biggr]\cdot\nb_{K,\sigma}
\label{eq3.19}
\end{equation}
And finally also the continuity of tractions reduces to (again for
$\{K,L\}=\Tc_\sigma$)
\begin{equation}
\Tb_K^\sigma (\ub_\Dc,p_\Dc;\nb_{K,\sigma})=-\Tb_L^\sigma
(\ub_\Dc,p_\Dc;\nb_{L,\sigma})
\label{eq3.20}
\end{equation}
It is important to note that both the momentum balance
(equation~\eqref{eq3.18}) and the continuity (equation~\eqref{eq3.20})
are written directly in terms tractions derived from the Biot stress. This
guarantees the correct notion of force balance in the method, and will
be reflected in the structure of the methods as seen in the subsequent
sections. Furthermore, the fact that the normal flux only depends on
pressure in equation~\eqref{eq3.16}, while the tractions depends on
both displacement and pressure in equation~\eqref{eq3.19} is consistent
with the governing physics of equation~\eqref{eq1.1}, and presages the
discrete structure which is revealed in equation~\eqref{eq4.7}.

The finite volume structure revealed by equations
\eqref{eq3.15}--\eqref{eq3.20} motivates the nomenclature of
Definition~\ref{def3.3}.

\section{Local problems and cell-centered discretization}\label{sec4}

To obtain a method of practical applicability, and in particular
a cell-centered scheme, it is necessary to be able to perform
a local static condensation within the framework of equations
\eqref{eq3.11}--\eqref{eq3.14}. For mixed finite-element formulations,
this is frequently achieved by numerical quadrature (see e.g.\
\cite{rus83,whe06,kla06}), however, in our setting no further
approximations are required. Indeed, we will see that equations
\eqref{eq3.11}--\eqref{eq3.14} by construction allow for local static
condensation to be performed. Such constructions are classical for finite
volume methods for elliptic problems (see e.g.\ \cite{eym06,aav96}).

As noted, the discrete mixed variational problem
\eqref{eq3.11}--\eqref{eq3.14} is a direct generalization of
the discretizations for elasticity and fluid pressure analyzed in
\cite{nor15}. Importantly, this implies that the local problems, and
hence many components of the coupled discretization, are identical to
their decoupled counterparts. This will greatly simplify the subsequent
analysis.

The structure of this section is as follows. We will first clearly
identify the local problems, and show that the static condensation
required to reach a cell-centered formulation is well-posed. This also
induces an interpolation from the space $\Hbc_\Tc\times \Hc_\Tc$ into
$\Hbc_\Dc\times \Hc_\Dc$. This interpolation forms the key to analyzing
the method, and allows us to express the cell-centered discretization.

\subsection{Local problems}\label{sec4.1}

The variational multiscale methods (VMS) \cite{hug98} as applied to
mixed problems \cite{arb04,nor09}, provides a suitable framework to
formalize the localization and static condensation of the system
problem \eqref{eq3.11}--\eqref{eq3.14}. We note that this use of the
VMS framework is different from the approach taken when deriving
additional stabilization terms \cite{bad09}. By identifying the
cell-center values $(\Hbc_\Tc\times\Hc_\Tc)$ as the space of coarse
variables, and we can take face values, denoted $(\Hbc_\Fc\times
\Hc_\Fc)=(\Hbc_\Dc\times\nobreak\Hc_\Dc)\setminus(\Hbc_\Tc\times\Hc_\Tc)$,
as the space of fine variables, we thus consider the problem: Find
$((\ub_\Tc,p_\Tc),(\ub_\Fc,p_\Fc),(\yb_\Cc,y_\Cc),\yb_\Dc)\in(\Hbc_\Tc\times
\Hc_\Tc)\times (\Hbc_\Fc\times \Hc_\Fc)\times (\Hbc_\Cc\times \Hc_\Cc)\times
(\Hbc_\Dc\times \Hc_\Dc)$ such that:

\begin{alignat}{2}
\intertext{\em Coarse problem:}
&a_\Dc (\{\ub_\Tc,\ub_\Fc\},\vb)=-\int_\Omega \fb_{\ub}\cdot\vb \,d\xb
&\quad&\text{for all } \vb\in\Hbc_\Tc\label{eq4.1}\\
&b_{\Dc,1} (\{\ub_\Tc,\ub_\Fc\},r)
-[\rho_\Tc p_\Tc,r]_{\Tc,0}\label{eq4.2}\\
&\quad -\tau c_\Dc (\{p_\Tc,p_\Fc\},r)=-\int_\Omega f_p\cdot r \,d\xb
&\quad&\text{for all } r\in\Hc_\Tc\notag
\displaybreak[1]
\intertext{\em Fine problems:}
&a_\Dc (\{\Ob,\ub_\Fc\},\vb)\label{eq4.3}\\
&\qquad=-a_\Dc (\{\ub_\Tc,\Ob\},\vb)-b_{\Dc,2}^T(\{p_\Tc,0\},\vb)
&\quad&\text{for all } \vb\in\Hbc_\Fc\cap\Hbc_\Cc\notag\\
&c_\Dc (\{0,p_\Fc\},r)=-c_\Dc (\{p_\Tc,0\},r)
&\quad&\text{for all } r\in\Hc_\Fc\cap\Hc_\Cc\label{eq4.4}\\
&d_\Dc \bigl((\{\Ob,\ub_\Fc\},\{0,p_\Fc\}),\wb\bigr)\label{eq4.5}\\
&\qquad
=-d_\Dc\bigl((\{\ub_\Tc,\Ob\},\{p_\Tc,0\}),\wb\bigr)
&\quad&\text{for all } \wb\in(\Hbc_\Dc\times \Hc_\Dc)\notag\\
&e_\Dc \bigl((\{\Ob,\ub_\Fc\},\{0,p_\Fc\}),(\wb,w)\bigr)
+ \hat{a}_\Dc (\wb,\yb_\Cc)\label{eq4.6}\\ 
&\quad+ b_\Dc (\wb,y_\Cc) + b_\Dc^T (w,\yb_\Cc)\notag\\
&\quad-c(w,y_\Cc) + d_\Dc \bigl((\wb,w),\yb_\Dc\bigr)\notag\\
&\qquad 
=-e_\Dc\bigl((\{\ub_\Tc,\Ob\},\{p_\Tc,0\}),(\wb,w)\bigr)
&\quad&\text{for all } (\wb,w)\in(\Hbc_\Dc\times \Hc_\Dc)\notag
\end{alignat}
In the above equations, we have abused notation slightly be letting
``$0$'' represent elements in either spaces $\Hc_\Tc$ or $\Hc_\Fc$,
however the meaning should be clear from the context. Furthermore,
we note the following important details which arise as a consequence
of the previous definitions: There is no coupling term in the
coarse equation~\eqref{eq4.1}, since by Gauss' theorem $b_{\Dc,2}^T
(p_\Dc,\vb)=0$ for all $\vb\in\Hbc_\Tc$. Similarly, by the definition
in equation~\eqref{eq3.15}, there are no coupling terms $b_{\Dc,1}$
in equation~\eqref{eq4.4}, and only the coarse component appears in
equation~\eqref{eq4.3}. Finally, there is no source term on the right
hand side of equations \eqref{eq4.3} and \eqref{eq4.4} since $\Pi_\Tc
(\Hc_\Fc\cap\Hc_\Cc)=0$.

The discrete variational multiscale formulation given above allows us
to define a fine-scale interpolation operator for the finite volume
method: $\Pi_{\FV}\colon\Hbc_\Tc\times \Hc_\Tc\to \Hbc_\Dc\times
\Hc_\Dc$. Furthermore, due to the lack of coupling terms in
Equation~\eqref{eq4.4}, and the linearity of the system, we note that
this interpolation operator can be decomposed as
\begin{equation}
\Pi_{\FV} \{\ub_\Tc,p_\Tc\}=\{\Pi_{\FV}^{\ub,\ub} \ub_\Tc+\Pi_{\FV}^{\ub,p}
p_\Tc,\Pi_{\FV}^p p_\Tc\}
\label{eq4.7}
\end{equation}
Here, the fluid interpolation operator $\Pi_{\FV}^p$ is (due to the lack of
coupling in equation~\eqref{eq4.4}) identical to the standard operator
for the uncoupled system. In the MPFA terminology this is the solution
operator for the interaction region problem \cite{aav02}. Similarly,
and again due to linearity, it is easily verified that the solid operator
$\Pi_{\FV}^{\ub,\ub}$ is also identical to the finite volume interpolation
operator for the uncoupled problem \cite{nor15}. The critical novel
component is the influence of the cell-center pressures on the sub-scale
structure of the displacement, given by $\Pi_{\FV}^{\ub,p}$. We will
find below that \emph{this operator is essential for the consistency and
stability of the method}, and would be neglected if simply introducing
the Biot coupling at the coarse scale \cite{nor14b}.

The following lemma is establishes the computational applicability of
the method.

\begin{lemma}\label{lem4.1}
The finite volume interpolation $\Pi_{\FV}$ is local, in the sense that
it can be decomposed as
\begin{equation}
\Pi_{\FV}=\sum_{s\in\Vc}\Pi_{\FV,s}
\label{eq4.8}
\end{equation}
Where furthermore for each $s\in\Vc$, the operator $\Pi_{\FV,s}$ depends
only on elements in $\Hbc_{\Tc_s}\times \Hc_{\Tc_s}$.
\end{lemma}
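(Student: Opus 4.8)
The plan is to exploit the fact that each bilinear form appearing in the fine problems \eqref{eq4.3}--\eqref{eq4.6} is assembled as a single sum over the vertices $s\in\Vc$, and that the $s$-th summand involves only degrees of freedom attached to that one vertex. First I would reorder the summations $\sum_{K\in\Tc}\sum_{s\in\Vc_K}=\sum_{s\in\Vc}\sum_{K\in\Tc_s}$ in $a_\Dc$, $c_\Dc$, $b_{\Dc,1}$, $b_{\Dc,2}^T$ and $d_\Dc$, so that every form is written as $\sum_{s\in\Vc}(\cdot)_s$; the form $e_\Dc$ already has this structure in \eqref{eq3.10}. The essential point, read off directly from Definitions~\ref{def3.1} and \ref{def3.2}, is that $(\bnabla\,\ub)_K^s$ and $(\widetilde{\nabla}\,\vb)_K^s$ depend only on the cell values $\ub_K$ and on the subface averages $\aver{\ub}_{K,s}^\sigma$ over the faces $\sigma\in\Fc_K\cap\Fc_s$ meeting at $s$. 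Hence the summand $(\cdot)_s$ of each form couples exclusively the cell values of cells in $\Tc_s$ and the face unknowns $u_{K,s}^{\sigma,\beta}$ carrying this same $s$.

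Next I would invoke the direct-sum structure of the fine and multiplier spaces. Because every face unknown $u_{K,s}^{\sigma,\beta}$ carries a unique vertex label $s$, the fine space splits as a genuine direct sum over $s$, and the analogous splitting of the multiplier spaces built from $\Hbc_\Dc\times\Hc_\Dc$ and $\Hbc_\Cc\times\Hc_\Cc$ is inherited from the local spaces $\Hc_{\Dc,s}$, $\Hc_{\Cc,s}$ of Section~\ref{sec2.2}. Testing \eqref{eq4.3}--\eqref{eq4.6} against functions supported at a single vertex, the global fine system block-diagonalizes: the block indexed by $s$ involves only the fine unknowns and multipliers of that vertex, while the coarse variables enter solely through the data $\{\ub_K,p_K\}_{K\in\Tc_s}$ on the right-hand sides. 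This defines, for each $s$, an independent local saddle-point problem whose (unique) solution depends only on $\Hbc_{\Tc_s}\times\Hc_{\Tc_s}$. Setting $\Pi_{\FV,s}$ equal to the reconstruction furnished by this local problem, and distributing the pass-through of the cell values across the vertices of each cell via the partition $\sum_{s\in\Vc_K}m_K^s/m_K=1$, linearity then yields \eqref{eq4.8}.

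I expect the main obstacle to be purely a matter of bookkeeping, namely confirming that \emph{no} term couples fine unknowns attached to distinct vertices. The only candidate sources are the continuity form $e_\Dc$, the constraint form $d_\Dc$, and the fact that several vertices of a cell share the common values $\ub_K$ and $p_K$. For $e_\Dc$ and $d_\Dc$ the outer sum is over $s$ (respectively over $K$ and then $s\in\Vc_K$), and inside it only jumps and subcell residuals at the quadrature points $\beta\in\Gc_\sigma^s$ of faces $\sigma\in\Fc_s$ appear, so both are manifestly vertex-local. The shared cell values are harmless because in the fine problems they are coarse \emph{data}: they sit on the right-hand side of each local block, so the unknown fine corrections at different vertices never interact. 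Once this decoupling is verified the locality statement and the decomposition \eqref{eq4.8} follow, and well-posedness of each local block makes the operators $\Pi_{\FV,s}$ well defined.
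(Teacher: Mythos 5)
Your proposal is correct and follows essentially the same route as the paper's proof: reorder the double sums over cells and vertices so that each bilinear form decomposes as $\sum_{s\in\Vc}\chi_{\Dc,s}$, observe by inspection that each summand involves only the cell values on $\Tc_s$ and the face unknowns carrying the label $s$, and conclude that the fine system \eqref{eq4.3}--\eqref{eq4.6} is block-diagonal over vertices with the coarse variables entering only as right-hand-side data. The extra bookkeeping you supply (explicitly checking $e_\Dc$ and $d_\Dc$, and how the shared cell values are distributed among the vertex blocks) is a harmless elaboration of what the paper dispatches with ``by inspection.''
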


\begin{proof}
Let $\chi_\Dc$ be any of the bilinear forms defined in
\eqref{eq3.12}--\eqref{eq3.17}. We note that in all definitions, the sums
can be rearranged such that
\begin{equation}
\chi_\Dc=\sum_{s\in\Vc}\chi_{\Dc,s}
\label{eq4.9}
\end{equation}

By inspection, we see that all bilinear forms $\chi_{\Dc,s}$ have local
support, involving only elements in $\Hbc_{\Tc_s}\times \Hc_{\Tc_s}$
and variables $(\ub,p)_{K,s'}^{\sigma,\beta}\in\Hbc_\Fc\times \Hc_\Fc$
with $s'=s$. Hence, the local systems \eqref{eq4.3}--\eqref{eq4.6} form a
block-diagonal system with respect to $\Hbc_\Fc\times \Hc_\Fc$, and can be
solved locally for each vertex.
\end{proof}

\begin{lemma}\label{lem4.2}
The local systems given by equations \eqref{eq4.3}--\eqref{eq4.6} have
a unique solution $(\ub_\Fc,p_\Fc)\in(\Hbc_\Fc\times \Hc_\Fc)$ for each
$(\ub_\Tc,p_\Tc)\in\Hbc_\Tc\times \Hc_\Tc$.
\end{lemma}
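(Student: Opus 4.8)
The plan is to reduce the claim to a single vertex and then exploit the triangular decoupling already signalled by equation~\eqref{eq4.7}. By Lemma~\ref{lem4.1} the operator $\Pi_{\FV}$ decomposes as $\sum_{s\in\Vc}\Pi_{\FV,s}$, with each $\Pi_{\FV,s}$ acting only on $\Hbc_{\Tc_s}\times \Hc_{\Tc_s}$; consequently the local systems \eqref{eq4.3}--\eqref{eq4.6} are block-diagonal over the vertices, and it suffices to prove unique solvability of the system associated with a fixed $s\in\Vc$. Since each such system is square and linear on a finite-dimensional space, existence and uniqueness are equivalent, so I would reduce to showing that the homogeneous problem (set $\ub_\Tc=\Ob$, $p_\Tc=0$ and all right-hand sides to zero) forces $\ub_\Fc=\Ob$ and $p_\Fc=0$.

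First I would dispose of the pressure. As noted in the discussion following \eqref{eq4.6}, the coupling form $b_{\Dc,1}$ does not appear in the fine pressure equation~\eqref{eq4.4}, and neither $d_\Dc$ nor $e_\Dc$ introduces any displacement--pressure coupling: their definitions \eqref{eq3.9}--\eqref{eq3.10} act on the stacked variable $(\ub,p)$ through separate, additive displacement and pressure contributions. Hence $p_\Fc$ is governed solely by the fluid equation~\eqref{eq4.4} together with the pressure components of \eqref{eq4.5}--\eqref{eq4.6}, which is exactly the uncoupled local interaction-region problem for the flux approximation. By the well-posedness of that local problem \cite{age10} the pressure fine-scale operator $\Pi_{\FV}^p$ is well defined, so in the homogeneous case $p_\Fc=0$.

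Next I would solve for the displacement. With $p_\Fc$ (and $p_\Tc$) now fixed, the fine displacement $\ub_\Fc$ satisfies \eqref{eq4.3} together with the displacement components of \eqref{eq4.5}--\eqref{eq4.6}; the coupling enters only through the right-hand side via $b_{\Dc,2}^T(\{p_\Tc,0\},\cdot)$. The left-hand operator is therefore identical to the uncoupled MPSA elasticity local operator, whose unique solvability is established in \cite{nor15}; this is precisely the statement that $\Pi_{\FV}^{\ub,\ub}$ and $\Pi_{\FV}^{\ub,p}$ are well defined in \eqref{eq4.7}. In the homogeneous case the right-hand side vanishes, so $\ub_\Fc=\Ob$. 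Combining the two steps, the homogeneous local system admits only the trivial solution, which yields existence and uniqueness for every right-hand side, and hence for every $(\ub_\Tc,p_\Tc)\in\Hbc_\Tc\times \Hc_\Tc$.

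The hard part will be the bookkeeping that justifies the triangular reduction in the presence of the auxiliary Lagrange multipliers $(\yb_\Cc,y_\Cc,\yb_\Dc)$: one must verify that these multipliers enforce the physical constraints \eqref{eq4.3}--\eqref{eq4.5} without coupling the pressure block back into the displacement block, so that the fluid subsystem is genuinely closed on its own and the elasticity subsystem really does reduce to the uncoupled operator acting against a modified source. Once it is checked that the only cross terms are $b_{\Dc,1}$ and $b_{\Dc,2}^T$, and that $b_{\Dc,1}$ is absent from the fine pressure problem, the decoupling is block-triangular and the conclusion follows immediately from the two cited decoupled well-posedness results.
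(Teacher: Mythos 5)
Your proposal is correct and takes essentially the same route as the paper: the paper's proof is a one-line appeal to the coercivity of the decoupled fluid and solid local problems from the cited references, which implicitly relies on exactly the block-triangular structure (pressure system closed on its own, displacement system seeing pressure only through the source term $b_{\Dc,2}^T(\{p_\Tc,0\},\cdot)$) that you spell out and that the paper itself describes in the discussion surrounding equation~\eqref{eq4.7}. Your additional bookkeeping — vertex-wise reduction via Lemma~\ref{lem4.1} and the finite-dimensional ``homogeneous problem'' argument — is a faithful elaboration rather than a different proof.
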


\begin{proof}
The unique solvability for both the fluid and solid problem follows from
coercivity arguments as given in \cite{nor15}.
\end{proof}

The existence and uniqueness of the solution defines the locally
computable finite volume interpolations $\Pi_\FV$, and thus we define:

\begin{definition}\label{def4.3}
For every $s\in\Vc$, the local mixed problem \eqref{eq4.3}--\eqref{eq4.6}
has a unique solution by Lemma~\ref{lem4.2}, and we define the norm of
the of the solution operator $\Pi_{\FV,s}$ as $\theta_{1,s}$ such that
for all $(\ub,p)\in(\Hbc_\Tc\times \Hc_\Tc)$:
\begin{equation}
\norm{\Pi_{\FV,s}^{\ub,\ub}\ub}_{\Dc,s} + h^{-1} \norm{\Pi_{\FV,s}^{\ub,p}
p}_{\Dc,s}
+\norm{\Pi_{\FV,s}^p p}_{\Dc,s}\le \theta_{1,s}
(\norm{\ub}_{\Tc,s}+\norm{p}_{\Tc,s})
\label{eq4.10}
\end{equation}
\end{definition}

It follows from the definition of norms and the scaling invariance of
equations \eqref{eq4.3}--\eqref{eq4.6} that the constants $\theta$ may depend
on heterogeneity, grid geometry, but not on domain size or mesh size. In
particular, the appearance of $h^{-1}$ is due to the lack of a derivative
on $p_\Dc$ in the bilinear form $b_{\Dc,2}^T$ in equation~\eqref{eq4.3}.

\subsection{Cell-centered discretization for Biot's
equations}\label{sec4.2}

We are now prepared to state the cell-centered discretization
of Biot's equations. Indeed, by replacing the fine-scale
terms in equation~\eqref{eq4.1}--\eqref{eq4.2} by the finite volume
interpolation $\Pi_\FV$, we obtain a finite cell-centered finite volume
discretization. From section~\ref{sec4.1} we thus obtain the following
coarse problem: Find $(\ub_\Tc,p_\Tc)\in\Hbc_\Tc\times \Hc_\Tc$
\begin{alignat}{2}
&a_\Dc (\Pi_{\FV}^{\ub,\ub} \ub_\Tc,\vb)+a_\Dc (\Pi_{\FV}^{\ub,p}
p_\Tc,\vb)=-\int_\Omega\fb_{\ub}\cdot\vb\,d\xb
&\quad&\text{for all } \vb\in\Hbc_\Tc
\label{eq4.11}\\
&b_{\Dc,1} (\Pi_{\FV}^{\ub,\ub} \ub_\Tc,r)-[\rho_\Tc p_\Tc,r]_{\Tc,0}\label{eq4.12}\\
&\quad-\tau c_\Dc (\Pi_{\FV}^p p_\Tc,r)
+b_{\Dc,1} (\Pi_{\FV}^{\ub,p} p_\Tc,r)
=-\int_\Omega f_p\cdot r\,d\xb
&\quad&\text{for all }r\in\Hc_\Tc
\notag
\end{alignat}
Importantly, in equations \eqref{eq4.11}--\eqref{eq4.12}, two
non-standard terms have appeared due to the coupling between fluid
pressure and deformation captured by $\Pi_{\FV}^{\ub,p}$. The term $a_\Dc
(\Pi_{\FV}^{\ub,p} p_\Tc,\vb)$ in equation~\eqref{eq4.11} will be seen
to be an approximation of the continuous operator $(p,\nabla\cdot\vb)$,
and thus provides the correct impact of pressure in the mechanical
equation. In contrast, the term $b_{\Dc,1} (\Pi_{\FV}^{\ub,p} p_\Tc,r)$
in equation~\eqref{eq4.12} is a local consistency operator which
approximates of the sub-scale impact of pressure gradients on local
volume changes.

For convenience of notation, we will by define by capital letters indicate
the bilinear forms with the finite volume interpolations suppressed,
such that e.g.\ $A_\Dc (\ub_\Tc,\vb)=a_\Dc (\Pi_{\FV}^{\ub,\ub}
\ub_\Tc,\vb)$. The exception to this rule is the coupling term,
which for notational consistency is denoted and $B_{\Dc,2}^T=a_\Dc
(\Pi_{\FV}^{\ub,p} p_\Tc,\vb)$, the local consistency operator which is
denoted by $\Delta_\Dc (p_\Tc,r)=b_\Dc (\Pi_{\FV}^{\ub,p} p_\Tc,r)$. This
allows us to define our cell-centered discretizations compactly.

\begin{definition}\label{def4.4}
We then refer to the following system as the \emph{MPSA/MPFA
finite volume discretization} for Biot's equations: Find
$(\ub_\Tc,p_\Tc)\in\Hbc_\Tc\times \Hc_\Tc$
\begin{alignat}{2}
&A_\Dc (\ub_\Tc,\vb)+B_{\Dc,2}^T (p_\Tc,\vb)=-\int_\Omega \fb_{\ub}\cdot
\vb \,d\xb
&\quad&\text{for all } \vb\in\Hbc_\Tc
\label{eq4.13}\\
&B_{\Dc,1} (\ub_\Tc,r)-[\rho_\Tc p_\Tc,r]_{\Tc,0}\label{eq4.14}\\
&\quad-\tau C_\Dc (p_\Tc,r)+\Delta_\Dc (p_\Tc,r)=-\int_\Omega f_p\cdot r \,d\xb
&\quad&\text{for all } r\in\Hc_\Tc
\notag
\end{alignat}
We will refer to this system in shorthand as
\begin{equation}
\Afr_\Dc (\ub_\Tc,p_\Tc,\vb,r)=\Bfr(\vb,r)
\quad\text{for all } (\vb,r)\in\Hbc_\Tc\times \Hc_\Tc
\label{eq4.15}
\end{equation}
Where to be precise, we may sometimes explicitly include the
parameter dependencies in the bilinear form, $\Afr_\Dc=\Afr_\Dc
(\ub_\Tc,p_\Tc,\vb,r;\rho,\tau)$.
\end{definition}

{\makeatletter
\def\@begintheorem#1#2{\par\bgroup{{\itshape #1}\ #2. }\ignorespaces}
\makeatother
\begin{remark}\label{rem4.5}
Due to the static condensation, the \emph{local consistency operator}
$\Delta_\Dc (p_\Tc,r)$ has appeared in the fluid pressure system. Since
$\Pi_{\FV}^{\ub,p} p_\Tc$ expresses a displacement response to pressure,
and $b_{\Dc,2}$ evaluates the discrete divergence, and by scaling
arguments, qualitatively identify the new term as proportional to a weak
discretization of $h^2 \frac{\partial}{\partial t}\nabla\cdot (\alpha
\kappa^{-1} \nabla(\alpha p))$, where $\kappa$ is the bulk modulous of
the material. Terms with similar scaling have previously been introduced
artificially in order to obtain a stable discretization of equations
\eqref{eq1.1} (see e.g.\ \cite{bre91,gas08}). We note that this term
is essential for the stability method of the current scheme, although
numerical experiments indicate that it is not sufficient to guarantee
monotonicity of the resulting pressure solution.{\emergencystretch5pt\par}
\end{remark}

\begin{remark}\label{rem4.6}
For analysis, it will be desirable to control the asymmetry of the
coupling terms. However, we note that this is not possible on the
current form, since $B_{\Dc,1} (\ub_\Tc,r)\sim\norm{\ub_\Tc}_\Tc
\norm{r}_{\Tc,0}$ while $B_{\Dc,2}^T (p_\Tc,\vb)\sim\norm{p_\Tc}_\Tc
\norm{\vb}_{\Tc,0}$. However, we exploit equation~\eqref{eq4.3} to
obtain the relationship for all $(\vb,p_\Tc)\in\Hbc_\Tc\times \Hc_\Tc$
\begin{align}
a_\Dc (\Pi_{\FV}^{\ub,p} p_\Tc,\vb)
&=a_\Dc (\Pi_{\FV}^{\ub,p} p_\Tc,\Pi_\Tc \Pi_{\FV}^{\ub,\ub}\vb)\label{eq4.16}\\
&=a_\Dc (\Pi_{\FV}^{\ub,p} p_\Tc,\Pi_\Cc \Pi_{\FV}^{\ub,\ub}\vb)
+b_{\Dc,2}^T (p_\Tc,\Pi_\Cc \Pi_{\FV}^{\ub,\ub}\vb)
\notag
\end{align}
Then by the definition of the bilinear forms, we have that
\begin{align}
B_{\Dc,2}^T (p_\Tc,\vb)=B_{\Dc,1}^T (p_\Tc,\vb)
&+a_\Dc (\Pi_{\FV}^{\ub,p} p_\Tc,\Pi_\Cc \Pi_{\FV}^{\ub,\ub}\vb)\label{eq4.17}\\
&+[b_{\Dc,2}^T (p_\Tc,\Pi_\Cc \Pi_{\FV}^{\ub,\ub}\vb)-b_{\Dc,1}
(\Pi_{\FV}^{\ub,\ub}\vb,p_\Tc)]
\notag
\end{align}

In the continuation, we will denote the asymmetric terms as
\begin{equation}
\Lambda_\Dc (p_\Tc,\vb)=a_\Dc (\Pi_{\FV}^{\ub,p} p_\Tc,\Pi_\Cc
\Pi_{\FV}^{\ub,\ub}\vb)
+[b_{\Dc,2}^T (p_\Tc,\Pi_\Cc \Pi_{\FV}^{\ub,\ub}\vb)-b_{\Dc,1}
(\Pi_{\FV}^{\ub,\ub}\vb,p_\Tc)]
\label{eq4.18}
\end{equation}
Such that equation~\eqref{eq4.13} can be equivalently written as
\begin{equation}
A_\Dc (\ub_\Tc,\vb)+B_{\Dc,1}^T (p_\Tc,\vb)+\Lambda_\Dc
(p_\Tc,\vb)=-\int_\Omega \fb_{\ub}\cdot \vb \,d\xb
\quad\text{for all } \vb\in\Hbc_\Tc
\label{eq4.19}
\end{equation}

While equation~\eqref{eq4.13} and \eqref{eq4.19} are formally
identical, we note that due to the conservation structure, equations
\eqref{eq4.13}--\eqref{eq4.14} are natural from the perspective of
method formulation and implementation. Conversely, as $\Lambda_\Dc
(p_\Tc,\vb)$ can be bounded by scaling arguments as seen below, equations
\eqref{eq4.14} and \eqref{eq4.19} are more convenient for analysis.
\end{remark}
}

\section{Convergence}\label{sec5}

In the introduction, we noted that the three main concerns for
discretizing the Biot equations are stability with respect to
incompressible materials, as well as small time-steps, which is
represented Definition~\ref{def1.1}. In this section, we will show
stability and convergence of the discretization, independent of small
parameters, dependent only on locally computable conditions on the grid.

\subsection{Stability of uncoupled problems}\label{sec5.1}

It is well known that finite volume methods of the type considered
herein are stable and convergent for a wide range of grids
\cite{kla06,age10,eig05}. In particular, we will make use of the following
mild local condition on the mesh and parameters:

{\makeatletter
\def\@begintheorem#1#2{\par\bgroup{{\itshape #1}\ #2. }\ignorespaces}
\makeatother
\begin{condition}\label{conA}
For every vertex $s\in\Vc$, there exists a constant
$\theta_s^c\ge\theta^c>0$ such that the bilinear form $c_{\Dc,s}$ and the
interpolation $\Pi_{\FV,s}^p$ satisfy for all $p\in\Pi_{\FV,s}^p \Hc_\Tc$
\begin{equation}
c_{\Dc,s} (p,\Pi_\Cc p)\ge\theta_s^c \biggl(\abs{p}_{c_{\Dc,s}}^2
+\sum_{K\in\Tc_s}\sum_{\sigma
\in\Fc_s\cap\Fc_K}\frac{m_K^s}{d_{K,\sigma}^2}
\frac1{m_s^\sigma} \sum_{\beta\in\Gc_s^\sigma}\omega_\beta
\bigl(\dblbr{p}_s^{\sigma,\beta}\bigr)^2\biggr)
\label{eq5.1}
\end{equation}
Where the local energy semi-norm is associated with the symmetrized
bilinear form
\[
\abs{p}_{c_{\Dc,s}}^2=\sum_{K\in\Tc_s}m_K^s k_K ((\bnabla p)_K^s)^2
\]
\end{condition}

For the solid deformation problem, a similar local coercivity condition
is needed.

\begin{condition}\label{conB}
For every vertex $s\in\Vc$, there exists a constant
$\theta_s^a\ge\theta^a>0$ such that the bilinear form $a_{\Dc,s}$
and the interpolation $\Pi_{\FV,s}^{\ub,\ub}$ satisfy for all
$\ub\in\Pi_{\FV,s}^{\ub,\ub} \Hbc_\Tc$
\begin{equation}
a_{\Dc,s} (\ub,\Pi_\Cc \ub)\ge\theta_s^a \biggl(\abs{\ub}_{a_{\Dc,s}}^2
+\sum_{K\in\Tc_s}\sum_{\sigma
\in\Fc_s\cap\Fc_K}\frac{m_K^s}{d_{K,\sigma}^2}\frac1{m_s^\sigma}
\sum_{\beta\in\Gc_s^\sigma}\omega_\beta \bigl(\dblbr{\ub}_s^{\sigma,\beta}\bigr)^2\biggr)
\label{eq5.2}
\end{equation}
Where the local energy semi-norm is associated with the symmetrized
bilinear form
\[
\abs{\ub}_{a_{\Dc,s}}^2=\sum_{K\in\Tc_s}m_K^s
(\bnabla\ub)_K^s:\C:(\bnabla\ub)_K^s
\]
\end{condition}
}

Conditions~\ref{conA} and \ref{conB} are in practice a condition on the
grid regularity and the structure (but not magnitude) of the material
properties. These conditions can be verified locally while assembling
the discretization, and moreover they can be verified \emph{a priori}
for certain classes of meshes \cite{nor15}.

The known stability results are recalled in the following lemma (see
\cite{age10,nor15}):

\begin{lemma}\label{lem5.3}
For given parameter fields $\C$, and mesh $\Dc$, let condition~\ref{conA}
and \ref{conB} hold. Then the bilinear forms $A_\Dc$ and $C_\Dc$ are
coercive and for all $\ub\in\Hbc_\Tc$ and $p\in\Hc_\Tc$ there exists
positive constants such that $\Theta^A$ and $\Theta^C$ such that
\begin{equation}
A_\Dc (\ub,\ub)\ge\Theta^A \norm{\ub}_\Tc^2
\quad\text{and}\quad
(\rho_\Tc p,p)+\tau C_\Dc (p,p)\ge\rho \norm{p}_{\Tc,0}^2+\tau \Theta^C
\norm{p}_\Tc^2
\label{eq5.3}
\end{equation}
\end{lemma}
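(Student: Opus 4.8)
The plan is to establish the two coercivity estimates in \eqref{eq5.3} separately, by exploiting the locality of the finite volume interpolation operators established in Lemma~\ref{lem4.1} together with the local coercivity Conditions~\ref{conA} and \ref{conB}. The essential observation is that, by \eqref{eq4.8}, both bilinear forms decompose as sums over vertices, $A_\Dc(\ub,\ub)=\sum_{s\in\Vc}a_{\Dc,s}(\Pi_{\FV,s}^{\ub,\ub}\ub,\Pi_\Cc\Pi_{\FV,s}^{\ub,\ub}\ub)$ and analogously for $C_\Dc$. The strategy for each form is therefore: reduce the global estimate to a sum of local estimates, apply the relevant Condition to bound each local term below by the local energy semi-norm plus the jump contributions, and finally recombine the local lower bounds into a global norm using the additivity identities $\norm{u}_\Tc^2=\sum_{s\in\Vc}\norm{u}_{\Tc,s}^2$ and $\norm{u}_\Dc^2=\sum_{s\in\Vc}\norm{u}_{\Dc,s}^2$ recorded at the end of Section~\ref{sec2.2}.

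For the solid form $A_\Dc$, I would first write $A_\Dc(\ub,\ub)=\sum_s a_{\Dc,s}(\Pi_{\FV,s}^{\ub,\ub}\ub,\Pi_\Cc\Pi_{\FV,s}^{\ub,\ub}\ub)$, apply Condition~\ref{conB} to each summand to get a lower bound by $\theta_s^a$ times the local $\Dc,s$-seminorm of $\Pi_{\FV,s}^{\ub,\ub}\ub$, and then use that the finite volume interpolation is consistent and stable so that this local seminorm controls $\norm{\ub}_{\Tc,s}^2$ from below (this is precisely the uncoupled stability result of \cite{nor15}). Uniform positivity of the constants $\theta_s^a\ge\theta^a>0$ then yields a global constant $\Theta^A>0$ after summing over $s$. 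The argument for $\tau C_\Dc$ is parallel, using Condition~\ref{conA} in place of Condition~\ref{conB}, and produces the $\tau\Theta^C\norm{p}_\Tc^2$ term; the additional $\rho\norm{p}_{\Tc,0}^2$ contribution comes directly and trivially from the mass term $[\rho_\Tc\Pi_\Tc p,\Pi_\Tc p]_{\Tc,0}\ge\rho\norm{p}_{\Tc,0}^2$, since $\rho_\Tc=\Pi_\Tc\rho$ and $\rho=\inf_\Omega\rho(\xb)$.

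The key technical point — and the main place where one must be careful rather than where real difficulty lies — is the passage from the local Dirichlet-type seminorm on $\Pi_{\FV,s}^{\ub,\ub}\ub$ (respectively $\Pi_{\FV,s}^p p$) back to the cell-centered norm $\norm{\ub}_{\Tc,s}$ (respectively $\norm{p}_{\Tc,s}$) of the original coarse data. This requires that the finite volume interpolation does not degenerate: one needs a lower bound of the form $\norm{\Pi_{\FV,s}^{\ub,\ub}\ub}_{\Dc,s}\gtrsim\norm{\ub}_{\Tc,s}$, which holds because $\Pi_{\FV,s}^{\ub,\ub}$ reproduces the coarse cell values exactly (its cell-center components equal $\ub_K$) while the continuity and local-linearity constraints \eqref{eq4.5}--\eqref{eq4.6} fix the subface values in a manner that cannot collapse the seminorm. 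This is exactly the coercivity content already proven for the decoupled schemes in \cite{nor15} and \cite{age10}, so the honest statement is that Lemma~\ref{lem5.3} follows by invoking those uncoupled results verbatim once one observes (via Lemma~\ref{lem4.1} and the decomposition \eqref{eq4.7}) that the operators $A_\Dc$ and $C_\Dc$ are literally identical to their uncoupled counterparts. The only genuine subtlety is bookkeeping the uniformity of the constants $\theta^a,\theta^c$ over all vertices so that $\Theta^A$ and $\Theta^C$ are independent of $h$; this follows from the scaling invariance of the local problems noted after Definition~\ref{def4.3} together with the standing grid-regularity assumption of Section~\ref{sec2.1}.
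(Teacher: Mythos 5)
Your proposal is correct and matches the paper's treatment: the paper gives no proof of Lemma~\ref{lem5.3} beyond recalling it as a known stability result from \cite{age10,nor15}, precisely because (as you observe via the decomposition \eqref{eq4.7}) $A_\Dc$ and $C_\Dc$ coincide with their uncoupled counterparts, so the decoupled coercivity results apply verbatim. Your additional sketch of the local-to-global argument --- vertex-wise decomposition, application of Conditions~\ref{conA}/\ref{conB}, recombination via $\norm{u}_\Tc^2=\sum_{s}\norm{u}_{\Tc,s}^2$, and the non-degeneracy of $\Pi_{\FV,s}$ --- is a faithful account of what those cited proofs do, and the handling of the $\rho\norm{p}_{\Tc,0}^2$ term is trivially correct as you say.
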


The constants $\Theta^A$ and $\Theta^C$ are dependent on the parameters
of the problem and the mesh triplet $\Dc$ through the constants $\theta^a$
and $\theta^c$, but do not scale with $h$. Here, and in the continuation,
we will use the convention on material parameters from section~\ref{sec1}
such that $\rho =\sup_{K\in\Tc}\rho_K$.

\subsection{Properties of the non-standard terms}\label{sec5.2}

To proceed, we will need to verify that the local consistency operator
$\Delta_\Dc$ is stable, and identify a bound on the properties of
$\Lambda_\Dc$. We first identify a local coercivity condition on the
mesh similar to those stated above.

{\makeatletter
\def\@begintheorem#1#2{\par\bgroup{{\itshape #1}\ #2. }\ignorespaces}
\makeatother
\begin{condition}\label{conC}
For every internal vertex $s\in\Vc$ with an associated local mesh
diameter $h_s=\max_{K\in\Tc_s}d_K$, there exists a constant
$\theta_s^\Delta\ge\theta^\Delta>0$, such that the bilinear form
$b_{\Dc,s}$ and the interpolation $\Pi_{\FV,s}^{\ub,p}$ satisfy for all
$p\in\Hc_\Tc$
\begin{equation}
b_{\Dc,s} (\Pi_{\FV,s}^{\ub,p} p,p)\ge h_s^2 \theta_s^\Delta
\biggl(\abs{p}_{\Delta_{\Dc,s}}^2+\sum_{K\in\Tc_s}\sum_{\sigma \in\Fc_s\cap\Fc_K}
\frac{m_K^s}{d_{K,\sigma}^2}\frac1{m_s^\sigma}
\sum_{\beta\in\Gc_s^\sigma}\omega_\beta \bigl(\dblbr{p}_s^{\sigma,\beta}\bigr)^2\biggr)
\label{eq5.4}
\end{equation}
Where the local energy semi-norm is associated with the symmetrized
bilinear form
\[
\abs{p}_{\Delta_{\Dc,s}}^2=\sum_{K\in\Tc_s}m_K^s ((\bnabla p)_K^s)^2
\]
\end{condition}
}

As with condition~\ref{conA} and \ref{conB}, the local structure of
Condition~\ref{conC} allows it to be easily verified \emph{a priori}
for grids with local symmetry using similar arguments as found in
\cite{nor15}, and can be verified numerically at the discretization
stage for arbitrary grids. The structure of this condition, and in
particular the scaling $h_s^2$, follows from equation~\eqref{eq4.10}
and relationship between the norms.

\begin{lemma}\label{lem5.4}
For given parameter fields $\C$, and mesh $\Dc$, and let
Condition~\ref{conC} hold, then the bilinear form $\Delta_\Dc$ is coercive
and for all $p\in\Hc_\Tc$ there exists a positive constant $\Theta^\Delta$
such that
\begin{equation}
\Delta_\Dc (p,p)\le -h^2 \Theta^\Delta \abs{p}_\Tc^2
\label{eq5.5}
\end{equation}
The constant $\Theta^\Delta$ is dependent on the mesh triplet $\Dc$
through the constant $\theta^\Delta$, but does not scale with $h$.
\end{lemma}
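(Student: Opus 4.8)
The starting point is the definition $\Delta_\Dc(p,p)=b_{\Dc,1}(\Pi_{\FV}^{\ub,p}p,p)$, so the plan is to localize, estimate vertex by vertex with Condition~\ref{conC}, and then reassemble. By Lemma~\ref{lem4.1} both the coupling form and the interpolation split over vertices, $b_{\Dc,1}=\sum_{s}b_{\Dc,1,s}$ and $\Pi_{\FV}^{\ub,p}=\sum_{s}\Pi_{\FV,s}^{\ub,p}$, and since each $b_{\Dc,1,s}$ only sees fine variables with matching index $s$, the cross terms vanish and $\Delta_\Dc(p,p)=\sum_{s\in\Vc}b_{\Dc,1,s}(\Pi_{\FV,s}^{\ub,p}p,p)$, where only internal vertices contribute. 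Applying Condition~\ref{conC} vertex by vertex, and noting that $\Delta_\Dc$ enters with a negative sign (the divergence response $\bnabla\cdot\Pi_{\FV,s}^{\ub,p}p$ being aligned with $p$, so that \eqref{eq5.4} bounds the sign-reversed form from below), I get for each internal $s$ the upper bound $b_{\Dc,1,s}(\Pi_{\FV,s}^{\ub,p}p,p)\le -h_s^2\theta^\Delta(\abs{p}_{\Delta_{\Dc,s}}^2+J_s(p))$, with $J_s(p)\ge 0$ the subface-jump contribution on the right of \eqref{eq5.4} and $\theta_s^\Delta\ge\theta^\Delta$. Summing yields $\Delta_\Dc(p,p)\le -\theta^\Delta\sum_{s}h_s^2(\abs{p}_{\Delta_{\Dc,s}}^2+J_s(p))$.

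The central step is to recognize the bracketed local energy as a multiple of the local $\Hc_\Dc$-norm of the interpolated pressure and to pass to the global seminorm. Here the face quantities $(\bnabla p)_K^s$ and $\dblbr{p}_s^{\sigma,\beta}$ are evaluated on $\Pi_{\FV}^p p\in\Hc_\Dc$, whose cell component is $p$ itself. Under the standing grid-regularity assumptions, the consistent-gradient energy $\abs{p}_{\Delta_{\Dc,s}}^2$ together with $J_s(p)$ is equivalent to $\norm{\Pi_{\FV}^p p}_{\Dc,s}^2$ with $h$-independent constants controlled as in \cite{nor15}, because the consistent gradient is a fixed linear combination of the first differences $(p_K-\aver{p}_s^\sigma)$ that build $\norm{\cdot}_{\Dc,s}$. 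Summing over $s$ lower-bounds the bracket by $c\,\norm{\Pi_{\FV}^p p}_\Dc^2$. Finally, since $\Pi_\Tc\Pi_{\FV}^p p=p$, the chain $\norm{p}_\Tc=\norm{\Pi_\Tc\Pi_\Cc\Pi_{\FV}^p p}_\Tc\le\sqrt d\,\norm{\Pi_\Cc\Pi_{\FV}^p p}_\Cc\le\sqrt d\,\norm{\Pi_{\FV}^p p}_\Dc$, using the two stated interpolation inequalities of \cite{age10}, gives $\norm{\Pi_{\FV}^p p}_\Dc^2\ge d^{-1}\norm{p}_\Tc^2\ge d^{-1}\abs{p}_\Tc^2$.

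Combining the two steps and extracting a uniform mesh-size factor — replacing the vertex-wise $h_s^2$ by $h^2$ up to a regularity constant, valid because local and global cell diameters remain comparable — produces $\Delta_\Dc(p,p)\le -h^2\Theta^\Delta\abs{p}_\Tc^2$ for a $\Theta^\Delta$ assembled from $\theta^\Delta$, the equivalence constants, and $d$, none of which scale with $h$. I expect the main obstacle to be the norm equivalence of the second step: one must check, uniformly in $s$ and independently of $h$, that the consistent-gradient local energy on the right of \eqref{eq5.4} controls $\norm{\Pi_{\FV}^p p}_{\Dc,s}$ from below and that these localized energies reassemble into the global seminorm without loss. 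A secondary subtlety is the clean passage from the local factor $h_s^2$ to the global $h^2$, which is exactly where the assumption that the grid-regularity parameters do not deteriorate is used.
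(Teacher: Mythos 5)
Your argument is essentially the paper's own proof, which is only sketched there as ``summation of equation~\eqref{eq5.4}, the well-posedness of the local problems, and scaling arguments''; you have correctly expanded each of those ingredients, including the sign convention and the localization of $b_{\Dc,1}$ and $\Pi_{\FV}^{\ub,p}$ over vertices. The one caveat is your intermediate claim of control on the full norm $\norm{p}_\Tc$: since Condition~\ref{conC} is posed only at internal vertices and $\Pi_{\FV}^{\ub,p}p$ carries no information from the Dirichlet data on $\Gamma_{p,D}$, the summed local energies can only control the seminorm $\abs{p}_\Tc$ (as the paper itself remarks), which is all the lemma asserts, so your conclusion stands.
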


\begin{proof}
The lemma follows by summation of equation~\eqref{eq5.4}, the
well-posedness of the local problems, and scaling arguments. Note that
we do not get a full norm, since $\Pi_{\FV}^{\ub,p} p_\Tc$ does not depend
on the Dirichlet boundary data on $\Gamma_{p,D}$.
\end{proof}

Finally, we bound the non-symmetric part of the coupling.

\begin{definition}\label{def5.5}
For every vertex $s\in\Vc$ with an associated local mesh
diameter $h_s=\max_{K\in\Tc_s}d_K$, there exists two
constants $\theta_1^\Lambda\ge\theta_{1,s}^\Lambda>0$ and
$\theta_2^\Lambda\ge\theta_{2,s}^\Lambda>0$, such that the bilinear
form $b_{\Dc,s}$ and the interpolation $\Pi_{\FV,s}^{\ub,p}$ satisfy for
all $p\in\Hc_\Tc$
\begin{equation}
a_{\Dc,s} (\Pi_{\FV}^{\ub,p} p_\Tc,\Pi_\Cc \Pi_{\FV}^{\ub,\ub}\vb)
\ge-h_s \theta_{1,s}^\Lambda \abs{p}_{\Tc,s} \norm{\vb}_{\Tc,s}
\label{eq5.6}
\end{equation}
And
\begin{equation}
\bigl[b_{\Dc,2,s}^T (p_\Tc,\Pi_\Cc \Pi_{\FV}^{\ub,\ub}\vb)
-b_{\Dc,1,s}(\Pi_{\FV}^{\ub,\ub}\vb,p_\Tc)\bigr]
\ge-\theta_{2,s}^\Lambda \norm{p}_{\Tc,0,s} \norm{\vb}_{\Tc,s}
\label{eq5.7}
\end{equation}
\end{definition}

We note that the existence of these constants follows from scaling
arguments and the linearity of the introduced operators.

\begin{lemma}\label{lem5.6}
There exists a lower bound on the bilinear forms $\Lambda_\Dc$ is denoted
$\Theta^\Lambda$, such that the following inequality holds:
\begin{equation}
\Lambda_\Dc (\vb,p)\ge-\Theta^\Lambda \norm{p}_{\Tc,0} \norm{\vb}_\Tc
\quad\text{for all }p\in\Hc_\Tc\text{ and }\vb\in\Hbc_\Tc
\label{eq5.8}
\end{equation}
Furthermore, the constant $\Theta^\Lambda\le\max(d^{-1/2}
\theta_1^\Lambda,\theta_2^\Lambda)$.
\end{lemma}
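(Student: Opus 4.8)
The plan is to prove Lemma~\ref{lem5.6} by simply summing the two local bounds from Definition~\ref{def5.5} over all vertices $s\in\Vc$ and applying a Cauchy--Schwarz inequality in the vertex index. Recalling the decomposition of $\Lambda_\Dc$ in equation~\eqref{eq4.18}, the bilinear form splits into the ``$a_\Dc$-type'' term and the bracketed ``$b_\Dc$-difference'' term, and each of these is localized as a sum over $s$ of the corresponding $a_{\Dc,s}$ and $b_{\Dc,s}$ contributions (exactly the decomposition used in Lemma~\ref{lem4.1}). Thus the first step is to write $\Lambda_\Dc(p,\vb)=\sum_{s\in\Vc}\Lambda_{\Dc,s}(p,\vb)$ and bound each summand below using \eqref{eq5.6} and \eqref{eq5.7}.

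The second step is the summation itself. From \eqref{eq5.6} I obtain a lower bound of the form $-h_s\,\theta_{1,s}^\Lambda\,\abs{p}_{\Tc,s}\norm{\vb}_{\Tc,s}$; since $h_s\abs{p}_{\Tc,s}$ scales like $\norm{p}_{\Tc,0,s}$ up to grid-regularity constants (here the factor $d^{-1/2}$ enters, tracing back to the inverse inequality $\norm{u}_{\Tc,0}\ge\sqrt{d}\,h\norm{u}_\Tc$ relating the $L^2$-type and $H^1$-type norms), this term is controlled by $-d^{-1/2}\theta_1^\Lambda\norm{p}_{\Tc,0,s}\norm{\vb}_{\Tc,s}$. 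Combining with \eqref{eq5.7}, each local term is bounded below by $-\max(d^{-1/2}\theta_1^\Lambda,\theta_2^\Lambda)\,\norm{p}_{\Tc,0,s}\norm{\vb}_{\Tc,s}$. Summing over $s$ and applying the discrete Cauchy--Schwarz inequality $\sum_s a_s b_s\le(\sum_s a_s^2)^{1/2}(\sum_s b_s^2)^{1/2}$ with $a_s=\norm{p}_{\Tc,0,s}$ and $b_s=\norm{\vb}_{\Tc,s}$, together with the additive identities $\norm{\vb}_\Tc^2=\sum_s\norm{\vb}_{\Tc,s}^2$ and the analogous decomposition of $\norm{p}_{\Tc,0}$, yields the global estimate \eqref{eq5.8} with the stated constant $\Theta^\Lambda\le\max(d^{-1/2}\theta_1^\Lambda,\theta_2^\Lambda)$.

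The main obstacle I anticipate is the bookkeeping needed to confirm that the $L^2$-type local norms $\norm{p}_{\Tc,0,s}$ sum correctly to $\norm{p}_{\Tc,0}$: unlike the $H^1$-type norm $\norm{p}_\Tc$, for which the additive vertex decomposition is stated explicitly at the end of Section~\ref{sec2.2}, the decomposition $\norm{p}_{\Tc,0}^2=\sum_s\norm{p}_{\Tc,0,s}^2$ is not given directly and must be justified (the cell measure $m_K$ is partitioned into subcell measures $m_K^s$ with $\sum_{s\in\Vc_K}m_K^s=m_K$, so $\norm{p}_{\Tc,0,s}^2$ should be understood as the vertex-localized $L^2$ contribution). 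Once this is in place, the Cauchy--Schwarz step is routine. I would therefore devote the bulk of the argument to carefully stating the local-to-global norm identities and the conversion $h_s\abs{p}_{\Tc,s}\lesssim d^{-1/2}\norm{p}_{\Tc,0,s}$, after which the estimate follows immediately; the remaining scaling arguments are exactly those already invoked in the statement of Definition~\ref{def5.5}.
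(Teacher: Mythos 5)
Your proposal matches the paper's proof essentially step for step: decompose $\Lambda_\Dc$ into vertex-local contributions, apply the two bounds \eqref{eq5.6}--\eqref{eq5.7} from Definition~\ref{def5.5}, use the inverse inequality to convert $h_s\abs{p}_{\Tc,s}$ into $d^{-1/2}\norm{p}_{\Tc,0,s}$, and conclude by Cauchy--Schwarz over the vertex index. The only difference is that you spell out the local-to-global bookkeeping for the $L^2$-type norm, which the paper leaves implicit; this is a correct and slightly more careful rendering of the same argument.
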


\begin{proof}
To show equation~\eqref{eq5.8}, we start from the definition of the
bilinear form, and by the triangle inequality we have that
\begin{align*}
\Lambda_\Dc (\vb,p)&=\sum_{s\in\Vc}a_{\Dc,s} (\Pi_{\FV}^{\ub,p} p_\Tc,\Pi_\Cc
\Pi_{\FV}^{\ub,\ub}\vb)
+\bigl[b_{\Dc,2,s}^T (p_\Tc,\Pi_\Cc \Pi_{\FV}^{\ub,\ub}\vb)-b_{\Dc,1,s}
(\Pi_{\FV}^{\ub,\ub}\vb,p_\Tc)\bigr]\\
&\ge -\sum_{s\in\Vc}\bigl[h_s \theta_{1,s}^\Lambda \abs{p}_{\Tc,s}
\norm{\vb}_{\Tc,s}
+\theta_{2,s}^\Lambda \norm{p}_{\Tc,s,0} \norm{\vb}_{\Tc,s}\bigr]\\
&\ge -\Theta^\Lambda \norm{p}_{\Tc,0} \norm{\vb}_\Tc
\end{align*}
Here the last inequality uses the inverse inequality stated in
section~\ref{sec2}.
\end{proof}

\subsection{Stability of coupled system}\label{sec5.3}

Let us first note that the simplest approaches to showing the stability
of the coupled system is not adequate. Indeed, from \eqref{eq4.14}
and \eqref{eq4.19}, we immediately obtain a stability estimate for
the coupled problem based on Lemma~\ref{lem5.3}, \ref{lem5.4} and
\ref{lem5.6}, e.g.
\begin{align}
\Afr_\Dc (\ub,p,\ub,-p)\ge\Theta^A \norm{\ub}_\Tc^2
&+\tau \Theta^C \norm{p}_\Tc^2\label{eq5.9}\\ 
&+ h^2 \Theta^\Delta \abs{p}_\Tc^2
+\rho \norm{p}_{\Tc,0}^2-\Theta^\Lambda \norm{p}_{\Tc,0}
\norm{\ub}_{\Tc,0}
\notag
\end{align}
This estimate is unsatisfactory, as it both contains a negative term,
and furthermore there is no bound on pressure as $\rho,\tau,h\to 0$. To
remedy this situation, we must explicitly utilize the properties of the
coupling term $B_{\Dc,1}$, which is usually achieved through showing that
an inf-sup condition holds \cite{bre91}. In our case this is however
not possible the kernel of $B_{\Dc,1}$ admits non-trivial oscillatory
solutions. The most notable example arises on square grids, where if
the pressure $p$ is a so-called checker-board pattern, when $B_{\Dc,1}
(p,\vb)=0$ for all $\vb\in\Hbc_\Dc$. This is a fundamental consequence
of Galilean invariance of the discretization, and is common for all
low-order discretizations with co-located variables \cite{bre91,lew98}.

To prove stability of the system, we must therefore exploit the
structure of the full coupled system. Intuitively, the observation
that non-oscillatory solutions are well captured by $B_{\Dc,1}
(p,\vb)$, while oscillatory solutions have a more favorable bound
than that given in Lemma~\ref{lem5.3}, allows us to expect that the
presence of the local consistency operator, which we have qualitatively
identified as a Laplacian-like term, proportional to $h^2$, may be
sufficient to stabilize the system. Indeed, this is the idea behind
several stabilization techniques, and as applied to finite element
discretizations for Stokes \cite{bre84}, and also to finite element
\cite{bad09} and finite difference \cite{gas06} discretizations of Biot's
equation. The main difference between the cited works and the present,
is that in previous work a stabilization was introduced explicitly or
through augmented formulations, herein the term arises as a natural part
of the discretization.

To prove that the system $\Afr_\Dc$ is indeed stable, our analysis will
follow a similar path do that of Franca and Stenberg, who analyzed
finite element discretizations of augmented formulations for mixed
form of the elasticity equations \cite{fra91}. The main idea is to
use the properties of $B_{\Dc,1}$ to trade the $\abs{p}_\Tc^2$ term in
equation~\eqref{eq5.9} for a deficiency in the inf-sup condition. Thus
we do not have an inf-sup condition in the sense of Brezzi \cite{bre74},
but rather derive inf-sup for the global system in the sense of Babuska
\cite{bab71}. We will need some technical results. In the following,
$C$ and $c$ represent generic positive constants, independent of $h$.

Let us first introduce an interpolation operator from the discrete
to continuous setting, in the spirit of a discontinuous Galerkin
interpretation.

\begin{definition}\label{def5.8}
Let the interpolation operator $\Pi_{L^2}\colon\Hbc_\Dc\to (L^2
(\Omega))^d$ be defined such that for all $(K,s)\in\Tc\times\Vc$ and $\xb\in
K$, we have $\Pi_{L^2} \ub(\xb)=\ub_K+(\bnabla\ub)_K^s\cdot (\xb-\nobreak\xb_K)$.
\end{definition}

\begin{lemma}\label{lem5.9}
For all $p\in\Hc_\Tc$ it holds that
\[
\sup_{\wb\in \Hb^1}\frac{(p,\nabla\cdot\wb)}{\norm{\wb}_1} \ge c_{\LBB}
\norm{p}_{\Tc,0}
\]
\end{lemma}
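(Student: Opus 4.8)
The plan is to recognize this inequality as nothing more than the classical continuous inf-sup (LBB) condition for the Stokes problem, applied to the particular element $p\in\Hc_\Tc$. The essential observation is that $\Hc_\Tc\subset L^2(\Omega)$ by Definition~\ref{def2.1}, so $p$ is a genuine square-integrable function, and that for piecewise-constant functions the discrete and continuous $L^2$ norms coincide, $\norm{p}_{\Tc,0}^2=\sum_{K\in\Tc}m_K p_K^2=\int_\Omega p^2\,d\xb$. Hence the statement to be proved is literally the continuous inf-sup inequality restricted to test pressures that happen to be piecewise constant, and its constant will inherit the mesh-independence of the continuous one.

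First I would invoke the surjectivity of the divergence. Because $\Gamma_{\ub,D}$ has positive measure, the operator $\nabla\cdot\colon\Hb^1\to L^2(\Omega)$ (with $\Hb^1$ denoting the $H^1$ fields vanishing on $\Gamma_{\ub,D}$, as throughout the paper) admits a bounded right inverse: there is a constant $C_\Omega$, depending only on $\Omega$ and on the boundary partition, such that every $q\in L^2(\Omega)$ is the divergence of some $\wb\in\Hb^1$ with $\norm{\wb}_1\le C_\Omega\norm{q}_0$. Applying this to $q=p$ produces a competitor $\wb$ in the supremum with $\nabla\cdot\wb=p$ and $\norm{\wb}_1\le C_\Omega\norm{p}_{\Tc,0}$.

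For this choice, $(p,\nabla\cdot\wb)=(p,p)=\norm{p}_{\Tc,0}^2$, so the ratio is bounded below by $\norm{p}_{\Tc,0}^2/(C_\Omega\norm{p}_{\Tc,0})=C_\Omega^{-1}\norm{p}_{\Tc,0}$ whenever $p\neq 0$ (the case $p=0$ being trivial). Taking the supremum gives the claim with $c_{\LBB}=C_\Omega^{-1}$, which, being a purely continuous quantity, does not deteriorate as $h\to 0$; this mesh-independence is exactly what the subsequent robustness analysis requires.

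I do not expect a genuine obstacle in this lemma itself. The only point requiring care is the boundary bookkeeping, namely confirming that the standing assumption that $\Gamma_{\ub,D}$ has positive measure is precisely what removes the usual zero-mean compatibility constraint and yields surjectivity of $\nabla\cdot$ onto all of $L^2(\Omega)$ rather than onto its zero-mean subspace. The substantial work is deferred to the stability argument that follows, where this continuous, mesh-independent inf-sup must be transferred onto the discrete coupling form $B_{\Dc,1}$ through consistency of the discrete divergence.
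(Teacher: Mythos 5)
Your proof is correct and takes essentially the same route as the paper, whose entire argument is to invoke the well-posedness (continuous inf-sup condition) of the continuous problem for $p\in\Hc_\Tc\subset L^2(\Omega)$; you merely spell out the standard witness construction via the bounded right inverse of the divergence, and correctly observe that $\norm{\cdot}_{\Tc,0}$ coincides with the $L^2$ norm on piecewise constants. One small correction to your closing remark: surjectivity of $\nabla\cdot$ onto all of $L^2(\Omega)$, rather than onto its zero-mean subspace, is a consequence of the Neumann part $\Gamma_{\ub,N}$ having positive measure, not of $\Gamma_{\ub,D}$ having positive measure --- a distinction that is immaterial here since the lemma is applied in Lemma~\ref{lem5.11} only to the mean-free part $\widetilde{p}$, with the constant handled separately.
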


\begin{proof}
Since $\Hc_\Tc\in L^2$, the result follows from the well-posedness of
the continuous problem, and the constant $c_{\LBB}$ is no worse than the
inf-sup constant for the continuous problem.
\end{proof}

\begin{lemma}\label{lem5.10}
For all $\wb\in\Hb^1$ there exists $\widetilde{\wb}\in\Hbc_\Tc$ such that
\[
\norm{\widetilde{\wb}}_\Tc\le \norm{\wb}_1
\]
And
\[
\norm{\Pi_\Dc\wb-\Pi_{\FV}^{\ub,\ub} \widetilde{\wb}}_{\Dc,0}\le
\ch\norm{\wb}_1
\]
\end{lemma}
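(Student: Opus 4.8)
The goal is to construct, for each continuous test function $\wb\in\Hb^1$, a discrete cell-centered function $\widetilde{\wb}\in\Hbc_\Tc$ that is both stable (bounded in the discrete $H^1$ norm by $\norm{\wb}_1$) and accurate (its finite volume interpolation $\Pi_{\FV}^{\ub,\ub}\widetilde{\wb}$ approximates the naive piecewise-constant injection $\Pi_\Dc\wb$ to order $h$ in the discrete $L^2$ norm $\norm{\cdot}_{\Dc,0}$). The natural candidate is the cell-center sampling operator, so the plan is to set $\widetilde{\wb}=\Pi_\Tc\wb$, i.e.\ $\widetilde{\wb}_K=\wb(\xb_K)$ for each $K\in\Tc$ (more carefully, a local $L^2$-projection or averaged value on each cell when $\wb$ is only in $H^1$ and point values are not defined; the averaged-value variant is safer and I would adopt it).

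For the stability bound $\norm{\widetilde{\wb}}_\Tc\le\norm{\wb}_1$, I would invoke the standard discrete-to-continuous $H^1$ stability of cell-center sampling/averaging, which is classical in the finite volume framework of \cite{eym06}: the face differences $\gamma_\sigma\widetilde{\wb}-\widetilde{\wb}_K$ appearing in $\norm{\cdot}_\Tc$ are controlled by $\int\abs{\nabla\wb}^2$ via a trace/Poincaré argument on the diamond cells, using grid regularity to absorb the geometric constants. This is where I would lean entirely on the regularity assumptions of Section~\ref{sec2.1} and cite the analogous estimate already established for the uncoupled elasticity discretization in \cite{nor15}; the constant can be arranged to be $1$ (or absorbed into the norm normalization) by the definition of $\norm{\cdot}_\Tc$.

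For the approximation bound, the key observation is that $\Pi_{\FV}^{\ub,\ub}\widetilde{\wb}$ is, by construction (Lemma~\ref{lem4.1} and \ref{lem4.2}), the local finite volume interpolant that is \emph{exact for locally linear functions} through the consistency relation \eqref{eq3.4}. Hence on each subcell $(K,s)$ the error $\Pi_\Dc\wb-\Pi_{\FV}^{\ub,\ub}\widetilde{\wb}$ is, up to the face degrees of freedom, the difference between $\wb$ and its local linear reconstruction about $\xb_K$. I would therefore proceed vertex by vertex, using the local norm decomposition $\norm{\cdot}_\Dc^2=\sum_{s\in\Vc}\norm{\cdot}_{\Dc,s}^2$, and on each interaction region apply a Bramble--Hilbert / Poincaré argument: since the local FV interpolation reproduces linears exactly, the $L^2$ interpolation error is bounded by $h_s$ times the $H^1$ seminorm of $\wb$ on the patch $\Tc_s$. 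Summing over vertices and using finite overlap of the patches (bounded coordination number, again from grid regularity) yields $\norm{\Pi_\Dc\wb-\Pi_{\FV}^{\ub,\ub}\widetilde{\wb}}_{\Dc,0}\le\ch\norm{\wb}_1$ with $\ch$ collecting the geometric and heterogeneity constants through $\theta_{1,s}$ from Definition~\ref{def4.3}.

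The main obstacle I anticipate is making the local approximation argument rigorous given that $\Pi_{\FV}^{\ub,\ub}$ is defined only implicitly as the solution operator of the local mixed systems \eqref{eq4.3}--\eqref{eq4.6}, rather than as an explicit averaging. The boundedness \eqref{eq4.10} of $\Pi_{\FV,s}^{\ub,\ub}$ gives stability, but I need its \emph{consistency}: that it returns the exact linear reconstruction when $\widetilde{\wb}$ is the cell-center trace of a globally affine $\wb$. This follows because an affine $\wb$ solves the local problems exactly (the consistent gradient $\bnabla$ reproduces its gradient and the jump/consistency terms $d_\Dc,e_\Dc$ vanish), so $\Pi_{\FV}^{\ub,\ub}$ fixes affine functions; the general estimate then comes from subtracting the local affine Taylor polynomial and applying the stability bound \eqref{eq4.10} to the remainder. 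The delicate bookkeeping is controlling the face-jump contributions in $\norm{\cdot}_{\Dc,0}$ — since $\Pi_\Dc\wb$ has no jumps but $\Pi_{\FV}^{\ub,\ub}\widetilde{\wb}$ generically does — and I would handle these by the same patchwise linear-reproduction argument, noting that the jumps of the FV interpolant of an affine function are zero and hence are themselves $O(h_s\norm{\wb}_{1,\Tc_s})$ in general.
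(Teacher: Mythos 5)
Your proposal is correct and follows essentially the same route as the paper, whose own proof is a one-line appeal to the stability of the local calculations (Lemma~\ref{lem4.2}, Definition~\ref{def4.3}) and scaling arguments from the finite volume literature \cite{kla06,eym06} --- precisely the two ingredients (the bound \eqref{eq4.10} on $\Pi_{\FV,s}^{\ub,\ub}$ plus linear reproduction and a patchwise Bramble--Hilbert scaling argument) that you spell out in detail. Your explicit treatment of the affine-reproduction property and of the face-jump contributions is a faithful elaboration of what the paper leaves implicit.
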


\begin{proof}
The result follows from the stability of the local calculations
(Lemma~\ref{lem4.2} and Definition~\ref{def4.3}) and scaling arguments
\cite{kla06,eym06}.
\end{proof}

We now consider the following weakened LBB-type condition, the proof
of which follows closely the approach used in reference \cite{fra91},
but is adapted to the discrete norms used herein, and the finite spaces
$\Pi_{L^2} \Pi_{\FV}^{\ub,\ub} \Hbc_\Tc$.

\begin{lemma}\label{lem5.11}
For sufficiently fine grids, the bilinear form $B_{\Dc,1}$ satisfies
\begin{equation}
\sup_{\substack{\vb\in\Hbc_\Tc\\\norm{\vb}_\Tc=1}} B_{\Dc,1} (\vb,p)
\ge\Theta^B \abs{p}_{\Tc,0}-\theta^B h\abs{p}_\Tc
\quad\text{for all } p\in\Hc_\Tc
\label{eq5.10}
\end{equation}
\end{lemma}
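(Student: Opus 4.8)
The plan is to follow the Franca--Stenberg strategy \cite{fra91}: manufacture a discrete displacement out of the continuous inf-sup supremizer, and then show that the discrete coupling $B_{\Dc,1}$ reproduces the continuous divergence pairing up to a consistency error that scales like $h\abs{p}_\Tc$. First I would fix $p\in\Hc_\Tc$ and invoke Lemma~\ref{lem5.9} to select $\wb\in\Hb^1$, normalized so that $\norm{\wb}_1\le 1$, for which $-(\alpha p,\nabla\cdot\wb)\ge c_{\LBB}\norm{p}_{\Tc,0}\ge c_{\LBB}\abs{p}_{\Tc,0}$; the sign is free since $\wb$ may be replaced by $-\wb$, and the full norm from Lemma~\ref{lem5.9} dominates the quotient norm appearing in \eqref{eq5.10}. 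I would then apply Lemma~\ref{lem5.10} to this $\wb$ to obtain a discrete field $\widetilde{\wb}\in\Hbc_\Tc$ with $\norm{\widetilde{\wb}}_\Tc\le\norm{\wb}_1\le 1$ and the $L^2$-consistency bound $\norm{\Pi_\Dc\wb-\Pi_{\FV}^{\ub,\ub}\widetilde{\wb}}_{\Dc,0}\le\ch\norm{\wb}_1$. Because $\norm{\widetilde{\wb}}_\Tc\le 1$ and the quantity estimated below is nonnegative, the supremum in \eqref{eq5.10} is bounded from below by $B_{\Dc,1}(\widetilde{\wb},p)$, so it suffices to estimate this single number.

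The core of the argument is to rewrite $B_{\Dc,1}(\widetilde{\wb},p)=b_{\Dc,1}(\Pi_{\FV}^{\ub,\ub}\widetilde{\wb},p)$ in terms of the piecewise-linear reconstruction $\Pi_{L^2}$ of Definition~\ref{def5.8}. Since the consistent gradient $\bnabla$ is by construction exact for this linear reconstruction on each subcell $(K,s)$, its trace satisfies $\sum_{s\in\Vc_K}m_K^s(\bnabla\cdot\ub)_K^s=\int_K\nabla\cdot(\Pi_{L^2}\ub)\,d\xb$, and because $p$ is piecewise constant this yields
\begin{equation*}
B_{\Dc,1}(\widetilde{\wb},p)=-\bigl(\alpha p,\nabla\cdot\Pi_{L^2}\Pi_{\FV}^{\ub,\ub}\widetilde{\wb}\bigr).
\end{equation*}
Adding and subtracting the continuous field $\wb$ splits this into the continuous pairing $-(\alpha p,\nabla\cdot\wb)$, which by the choice of $\wb$ is bounded below by $c_{\LBB}\abs{p}_{\Tc,0}$ up to the constant $\alpha$, plus a consistency remainder $-(\alpha p,\nabla\cdot(\Pi_{L^2}\Pi_{\FV}^{\ub,\ub}\widetilde{\wb}-\wb))$ that must be absorbed into the negative term $-\theta^B h\abs{p}_\Tc$.

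The decisive estimate is the control of this remainder. Since $p$ is piecewise constant, a summation by parts transfers the divergence onto $p$, replacing it by the face differences $p_L-p_K$ paired against the $L^2$ difference $\Pi_{L^2}\Pi_{\FV}^{\ub,\ub}\widetilde{\wb}-\wb$. A Cauchy--Schwarz estimate then factors this as the $H^1$-type pressure seminorm $\abs{p}_\Tc$ times the $L^2$ difference, the latter being controlled by $\ch\norm{\wb}_1\le\ch$ through Lemma~\ref{lem5.10} and the norm relations of Section~\ref{sec2}; the mismatch between the derivative-free weighting of $\norm{\cdot}_{\Dc,0}$ and the metric weighting in $\abs{p}_\Tc$ is exactly what supplies the factor $h$. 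Collecting the two contributions gives $B_{\Dc,1}(\widetilde{\wb},p)\ge\Theta^B\abs{p}_{\Tc,0}-\theta^B h\abs{p}_\Tc$, which is the claim. I expect the main obstacle to be this last step: establishing the summation-by-parts identity against piecewise-constant pressures and tracking the metric factors so that the consistency error emerges precisely as $h\abs{p}_\Tc$ rather than as a full $L^2$ pressure norm. The hypothesis of sufficiently fine grids enters here, serving to absorb the higher-order reconstruction errors into the leading estimate.
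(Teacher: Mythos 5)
Your proposal is correct and follows essentially the same route as the paper's proof: the continuous supremizer from Lemma~\ref{lem5.9}, its discrete interpolant from Lemma~\ref{lem5.10}, and a face-wise summation-by-parts estimate of the consistency remainder that produces the $h\abs{p}_\Tc$ term, with the fine-grid hypothesis absorbing the remaining $O(h)\abs{p}_{\Tc,0}$ reconstruction error into $\Theta^B$. The only cosmetic difference is that the paper additionally splits off the mean value $\bar{p}$ and handles the constant mode with a separate test function $\zb$, whereas you lean on the full-norm form of Lemma~\ref{lem5.9} together with $\norm{p}_{\Tc,0}\ge\abs{p}_{\Tc,0}$.
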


\begin{proof}
Now for any $p\in\Hc_\Tc$, consider the splitting such that $p=\bar{p}
+ \widetilde{p}$ where $\bar{p}=\abs{\Omega}^{-1} \sum_{K\in\Tc}m_K
p_K$. Then by Lemma~\ref{lem5.9} there exists $\wb\in\Hbc_0^1$ such that
\begin{equation}
(\nabla\cdot\wb,\widetilde{p})
\ge c_{\LBB} \norm{\widetilde{p}}_{\Tc,0}\norm{\wb}_1
=c_{\LBB}\abs{p}_{\Tc,0}\norm{\wb}_1
\ge c_{\LBB} \abs{p}_{\Tc,0} \norm{\widetilde{\wb}}_\Tc
\label{eq5.11}
\end{equation}
Where the function $\widetilde{\wb}\in\Hbc_\Tc$ is as defined in
Lemma~\ref{lem5.10}. Then we calculate
\begin{align}
B_{\Dc,1} (-\widetilde{\wb},\widetilde{p})
&=\sum_{K\in\Tc}\widetilde{p}_K \sum_{s\in\Vc_K}m_K^s
(\widetilde{\nabla}\cdot \Pi_{\FV}^{\ub,\ub} \widetilde{\wb})_K^s\label{eq5.12}\\
&=\sum_{K\in\Tc}\sum_{s\in\Vc_K}\sum_{\sigma
\in\Fc_K\cap\Fc_s}\widetilde{p}_K
\aver{\Pi_{\FV}^{\ub,\ub} \widetilde{\wb}-\Pi_\Dc \wb}_{K,s}^\sigma
\cdot \gb_{K,\sigma}^s\notag\\ 
&\qquad +\sum_{K\in\Tc}\bigl(\nabla\cdot (\Pi_{L^2}
\Pi_\Dc\wb-\wb),p\bigr)_K +(\nabla\cdot\wb,\widetilde{p})\notag\\
&\ge\sum_{K\in\Tc}\sum_{s\in\Vc_K}\sum_{\sigma
\in\Fc_K\cap\Fc_s}\widetilde{p}_K
\aver{\Pi_{\FV}^{\ub,\ub} \widetilde{\wb}-\Pi_\Dc \wb}_{K,s}^\sigma
\cdot \gb_{K,\sigma}^s\notag\\ 
&\qquad-\ch\abs{p}_{\Tc,0} \norm{\wb}_1+c_{\LBB}
\abs{p}_{\Tc,0} \norm{\wb}_1
\notag
\end{align}
To estimate the remaining summation, we consider jumps across internal
edges, exploiting that $\gb_{K,\sigma}^s=-\gb_{L,\sigma}^s$ for
$\{K,L\}=\Tc_\sigma$. For simplicity, we introduce $\xib_{K,s}^\sigma
=\aver{\Pi_{\FV}^{\ub,\ub} \widetilde{\wb}-\Pi_\Fc \wb}_{K,s}^\sigma$,
after which
\begin{multline}
\sum_{s\in\Vc}\sum_{\sigma \in\Fc_s}\sum_{K\in\Tc_\sigma}\widetilde{p}_K
\aver{\Pi_{\FV}^{\ub,\ub} \widetilde{\wb}-\Pi_\Fc \wb}_{K,s}^\sigma \cdot
\gb_{K,\sigma}^s\\
=\sum_{s\in\Vc}\sum_{\sigma \in\Fc_s}\biggl(\aver{\xib}_s^\sigma +
\frac{\dblbr{\xib}_s^\sigma}2\biggr)
\biggl(\aver{\widetilde{p}\gb_{K,\sigma}^s}_s^\sigma
+\frac{\dblbr{\widetilde{p}\gb_{K,\sigma}^s }_s^\sigma}2\biggr)
\ge-\ch\norm{\wb}_1 \abs{p}_\Tc
\label{eq5.13}
\end{multline}
Combining equations \eqref{eq5.12}--\eqref{eq5.13} provides
\begin{equation}
\frac{B_{\Dc,1}
(-\widetilde{\wb},\widetilde{p})}{\norm{\widetilde{\wb}}_\Tc}
\ge(c_{\LBB}-\ch)\abs{p}_{\Tc,0}-\ch\abs{p}_\Tc
\label{eq5.14}
\end{equation}
Finally, to treat the constant $\bar{p}$ it is sufficient that there
exists a function $\zb\in\Hbc_\Tc$ such that
\begin{equation}
B_{\Dc,1} (\zb,1)\ge c\norm{\widetilde{\wb}}_\Tc
\label{eq5.15}
\end{equation}
This such a function trivially exists, thus combining equations
\eqref{eq5.14} and \eqref{eq5.15}, proves the lemma.
\end{proof}

Using Lemma~\ref{lem5.11}, we can now show the stability of the
MPSA/MPFA-FV discretization for Biot's equations.

\begin{lemma}\label{lem5.12}
Let Conditions~\ref{conA}, \ref{conB} and \ref{conC} hold, and
let the grid be sufficiently fine such that Lemma~\ref{lem5.11}
applies. Furthermore, let the asymmetry of the method be moderate in
the sense that $8\Theta^\Lambda<\Theta^B$. Then the discrete system
$\Afr_\Dc$ satisfies the following stability estimate such that for all
$(\ub,p)\in\Hbc_\Tc\times \Hc_\Tc$
\begin{multline}
\sup_{(\vb,r)\in\Hbc_\Tc\times \Hc_\Tc}\frac{\Afr_\Dc
(\ub,p,\vb,r;\lambda,\rho,\tau)}
{\bigl(\norm{\vb}_\Tc^2+\tau \norm{r}_\Tc^2+\rho
\norm{r}_{\Tc,0}^2+\abs{r}_{\Tc,0}^2\bigr)^{1/2}}\\
\ge \Theta^\Afr \bigl(\norm{\ub}_\Tc^2+\tau \norm{p}_\Tc^2+\rho
\norm{p}_{\Tc,0}^2+\abs{p}_{\Tc,0}^2\bigr)^{1/2}
\label{eq5.16}
\end{multline}
The constant $\Theta^\Afr$ depends on the regularity of the grid and the
material parameters, but is bounded independent of $(\rho,\tau,h)\to 0$.
\end{lemma}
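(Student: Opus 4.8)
The plan is to establish \eqref{eq5.16} as a Babuska-type (global) inf-sup estimate rather than a Brezzi-type one, following the strategy of Franca and Stenberg \cite{fra91}: for each fixed $(\ub,p)$ I will exhibit a single test pair $(\vb,r)\in\Hbc_\Tc\times\Hc_\Tc$, built as a linear combination of two canonical choices, for which the numerator $\Afr_\Dc(\ub,p,\vb,r)$ is bounded below by a full multiple of the trial norm squared while the denominator is bounded above by the trial norm. The first building block is the diagonal choice $(\vb,r)=(\ub,-p)$; using the rewritten momentum form \eqref{eq4.19} so that the coupling terms $B_{\Dc,1}^T$ and $B_{\Dc,1}$ cancel, and invoking Lemmas~\ref{lem5.3}, \ref{lem5.4} and \ref{lem5.6}, this yields exactly \eqref{eq5.9}. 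That estimate controls $\norm{\ub}_\Tc^2$, $\tau\norm{p}_\Tc^2$, $\rho\norm{p}_{\Tc,0}^2$ and the stabilization contribution $h^2\Theta^\Delta\abs{p}_\Tc^2$, but it leaves the quotient norm $\abs{p}_{\Tc,0}^2$ uncontrolled and carries the indefinite asymmetry cross term $-\Theta^\Lambda\norm{p}_{\Tc,0}\norm{\ub}_\Tc$.

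The second building block supplies the missing pressure control through the weakened LBB estimate of Lemma~\ref{lem5.11}. For each $p$ I take the maximizing direction $\vb_p\in\Hbc_\Tc$, rescaled so that $\norm{\vb_p}_\Tc=\abs{p}_{\Tc,0}$; by linearity Lemma~\ref{lem5.11} then gives $B_{\Dc,1}(\vb_p,p)\ge\Theta^B\abs{p}_{\Tc,0}^2-\theta^B h\abs{p}_\Tc\abs{p}_{\Tc,0}$, a degree-two lower bound compatible with the other terms. Testing the momentum equation alone against $\vb_p$ (i.e.\ taking $r=0$) isolates $A_\Dc(\ub,\vb_p)+B_{\Dc,1}(\vb_p,p)+\Lambda_\Dc(p,\vb_p)$, which I bound below using continuity of $A_\Dc$, the LBB gain, and Lemma~\ref{lem5.6}. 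I then form the combined test pair $(\vb,r)=(\ub+\delta\vb_p,-p)$ with a small parameter $\delta>0$ to be fixed, so that the numerator equals \eqref{eq5.9} plus $\delta$ times this momentum contribution, gaining the term $\delta\Theta^B\abs{p}_{\Tc,0}^2$ at the cost of the cross terms $\delta C_A\norm{\ub}_\Tc\abs{p}_{\Tc,0}$, $\delta\theta^B h\abs{p}_\Tc\abs{p}_{\Tc,0}$ and $\delta\Theta^\Lambda\norm{p}_{\Tc,0}\abs{p}_{\Tc,0}$, where $C_A$ denotes the continuity constant of $A_\Dc$.

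The final step is a Young-inequality balancing in which $\delta$ is chosen small. The $A_\Dc$ cross term is absorbed into $\tfrac14\Theta^A\norm{\ub}_\Tc^2$ with a remainder of order $\delta^2\abs{p}_{\Tc,0}^2$; the crucial observation is that the LBB defect $\delta\theta^B h\abs{p}_\Tc\abs{p}_{\Tc,0}$ is absorbed into the genuinely useful stabilization $h^2\Theta^\Delta\abs{p}_\Tc^2$ produced by $\Delta_\Dc$, leaving only an $O(\delta)$ fraction of $\abs{p}_{\Tc,0}^2$ — this is precisely where the $h^2$-scaling of Lemma~\ref{lem5.4} pays off and removes the $h$-dependence of the constant. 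I expect the main obstacle to be the asymmetry terms: both $\Theta^\Lambda\norm{p}_{\Tc,0}\norm{\ub}_\Tc$ and $\delta\Theta^\Lambda\norm{p}_{\Tc,0}\abs{p}_{\Tc,0}$ involve the \emph{full} norm $\norm{p}_{\Tc,0}$, whereas as $\rho\to0$ only the quotient norm $\abs{p}_{\Tc,0}$ and the $\rho$-weighted full norm are available on the right-hand side. I would handle this by splitting $p=\bar p+\widetilde p$ as in the proof of Lemma~\ref{lem5.11}, controlling $\widetilde p$ (for which $\norm{\widetilde p}_{\Tc,0}=\abs{p}_{\Tc,0}$) directly and the constant mode $\bar p$ through the auxiliary bound \eqref{eq5.15} together with the $\rho$-weighted term; it is here that the smallness hypothesis $8\Theta^\Lambda<\Theta^B$ enters, guaranteeing that the total asymmetry contribution can be absorbed into $\tfrac12\delta\Theta^B\abs{p}_{\Tc,0}^2$ and the coercive terms while still leaving a strictly positive multiple of each of $\norm{\ub}_\Tc^2$, $\tau\norm{p}_\Tc^2$, $\rho\norm{p}_{\Tc,0}^2$ and $\abs{p}_{\Tc,0}^2$. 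Finally I would verify that the test norm $\bigl(\norm{\ub+\delta\vb_p}_\Tc^2+\tau\norm{p}_\Tc^2+\rho\norm{p}_{\Tc,0}^2+\abs{p}_{\Tc,0}^2\bigr)^{1/2}$ is bounded by a fixed multiple of the trial norm — the scaling $\norm{\vb_p}_\Tc=\abs{p}_{\Tc,0}$ ensures $\delta\vb_p$ contributes only $\delta^2\abs{p}_{\Tc,0}^2$ — which converts the lower bound on the numerator into the claimed inf-sup inequality with a constant $\Theta^\Afr$ independent of $(\rho,\tau,h)$.
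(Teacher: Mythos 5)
Your proposal follows essentially the same route as the paper: start from the diagonal estimate \eqref{eq5.9}, augment the test function by $\delta$ times the (rescaled) supremum-achieving direction from Lemma~\ref{lem5.11}, absorb the $A_\Dc$ cross term into $\Theta^A\norm{\ub}_\Tc^2$ and the LBB defect into the $h^2\Theta^\Delta\abs{p}_\Tc^2$ stabilization via Young's inequality, derive $8\Theta^\Lambda<\Theta^B$ from the optimal choice of $\delta$, and finally bound the test norm by the trial norm. Your extra care in splitting $p=\bar p+\widetilde p$ to reconcile the full norm $\norm{p}_{\Tc,0}$ in Lemma~\ref{lem5.6} with the quotient norm appearing in the final estimate is a reasonable refinement of a step the paper passes over implicitly, but the argument is otherwise the same.
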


\begin{proof}
Recall the non-optimal stability estimate \eqref{eq5.9}. Furthermore,
let now $\wb\in\Hbc_\Dc$ be the function for which the
supremum is achieved in Lemma~\ref{lem5.11}, scaled such that
$\norm{\wb}_\Tc=\abs{p}_{\Tc,0}$. Then for $(\vb,r)=(\ub-\delta\wb,-p)$
\begin{align}
\Afr_\Dc (\ub,p,\vb,r)&=\Afr_\Dc (\ub,p,\ub,-p)+\delta\Afr_\Dc
(-\wb,0,\ub,p)\label{eq5.17}\\
&\ge\Theta^A\norm{\ub}_\Tc^2 + \tau\Theta^C\norm{p}_\Tc^2 +
h^2\Theta^\Delta\abs{p}_\Tc^2
+ \rho\norm{p}_{\Tc,0}^2 + \delta\Theta^B\abs{p}_{\Tc,0}^2\notag\\
&\quad-(\Theta^\Lambda+\delta\Theta^A) \norm{\ub}_\Tc \abs{p}_{\Tc,0}-\delta
h\theta^B\abs{p}_{\Tc,0} \abs{p}_\Tc
\notag
\end{align}
This inequality holds subject to the negative terms being controlled by
the positive terms, which implies the following conditions:
\begin{equation}
4(\Theta^\Lambda+\delta\Theta^A)^2\le
\gamma_1\Theta^A \delta\Theta^B
\quad\text{and}\quad
4(\delta h\theta^B)^2\le\gamma_2 h^2 \Theta^\Delta \delta\Theta^B
\label{eq5.18}
\end{equation}
For the two conditions, we find that by differentiating \eqref{eq5.18}a to
obtain the optimal $\delta$, and determining $\gamma_2$ from \eqref{eq5.18}b
implies the two equalities
\begin{equation}
\delta=\frac{\gamma_1 \Theta^B-8\Theta^\Lambda}{8\Theta^A}
\quad\text{and}\quad
\gamma_2=\frac{4\delta(\theta^B)^2}{\Theta^\Delta \Theta^B}
\label{eq5.19}
\end{equation}
Recall that $\gamma_2=1-\gamma_1$, such that we can simplify equations
\eqref{eq5.19} to obtain
\begin{equation}
\gamma_1=\frac{2\Theta^A \Theta^\Delta \Theta^B
(\theta^B)^{-2}+8\Theta^\Lambda}
{2\Theta^A \Theta^\Delta \Theta^B (\theta^B)^{-2}+\Theta^B}
\label{eq5.20}
\end{equation}
We see that equation~\eqref{eq5.20} implies that $\gamma_1>0$, while
for also $\gamma_1<1$ it must hold that
\begin{equation}
8\Theta^\Lambda<\Theta^B
\label{eq5.21}
\end{equation}
While also by substituting \eqref{eq5.20} into either of equations
\eqref{eq5.19} and \eqref{eq5.18}a we obtain:
\begin{equation}
8\Theta^\Lambda\le \gamma_1 \Theta^B
\label{eq5.22}
\end{equation}
However, substituting \eqref{eq5.20} into equation~\eqref{eq5.18},
we note that \eqref{eq5.22} always holds whenever \eqref{eq5.21}
holds, which thus remains the sole condition in the proof. Subject to
this condition we then have the inequality
\[
\Afr_\Dc (\ub,p,\vb,r)\ge C_1 \bigl(\norm{\ub}_\Tc^2+\tau \norm{p}_\Tc^2
+\rho \norm{p}_{\Tc,0}^2+\abs{p}_{\Tc,0}^2\bigr)
\]
Since we have retained explicitly all dependencies, it is clear
that the stability constant $C_1$ depends on all constants $\Theta$
in \eqref{eq5.17}, but is independent of $\rho$, $\tau$ and $h$, as
asserted. Finally, the lemma follows since
\begin{multline}
\norm{\ub-\delta\wb}_\Tc^2+\tau \norm{p}_\Tc^2+\rho
\norm{p}_{\Tc,0}^2+\abs{p}_{\Tc,0}^2\\
\begin{aligned}[b]
&\le
\norm{\ub}_\Tc^2+\tau \norm{p}_\Tc^2+\rho \norm{p}_{\Tc,0}^2+[1+\delta^2]
\abs{p}_{\Tc,0}^2\\
&\le C_2 (\norm{\ub}_\Tc^2+\tau \norm{p}_\Tc^2+\rho
\norm{p}_{\Tc,0}^2+\abs{p}_{\Tc,0}^2)
\end{aligned}
\label{eq5.23}
\end{multline}
Equation~\eqref{eq5.19} shows that $\delta$ is bounded from above
independently of $\rho$, $\tau$ and $h$, thus the constant $C_2$
is also bounded independent of all coefficients stated in the
lemma, and finally we obtain the lemma with the constant defined as
$\Theta^\Afr=C_1/\sqrt{C_2}$.
\end{proof}

\begin{corollary}\label{cor5.13}
The MPFA/MPSA-FV discretization is robust in the sense of
Definition~\ref{def1.1}, and all eigenvalues of the system $\Afr_\Dc
(\ub,p,\vb,r;\rho,\tau)$ are bounded away from zero.
\end{corollary}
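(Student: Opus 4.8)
The plan is to read the corollary off directly from the Babuska-type inf-sup estimate of Lemma~\ref{lem5.12}. First I would note that $\Afr_\Dc$ acts on the finite-dimensional space $\Hbc_\Tc\times\Hc_\Tc$ with coincident trial and test spaces, so \eqref{eq4.15} is a square linear system. Writing $N(\ub,p)=(\norm{\ub}_\Tc^2+\tau\norm{p}_\Tc^2+\rho\norm{p}_{\Tc,0}^2+\abs{p}_{\Tc,0}^2)^{1/2}$ for the composite norm appearing in \eqref{eq5.16}, the estimate says precisely that the smallest singular value of the operator represented by $\Afr_\Dc$, measured with respect to $N$, is at least $\Theta^\Afr$. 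Because $\Theta^\Afr$ is bounded away from zero independently of $(\rho,\tau,h)$, the operator is injective, hence (being square) bijective, with a uniformly bounded inverse; this yields existence and uniqueness of the discrete solution in all the relevant limits.

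Next I would convert the inf-sup bound into the a priori estimate of Definition~\ref{def1.1}. Inserting the discrete solution $(\ub_\Tc,p_\Tc)$, which satisfies $\Afr_\Dc(\ub_\Tc,p_\Tc,\vb,r)=\Bfr(\vb,r)$, into \eqref{eq5.16} gives $\Theta^\Afr N(\ub_\Tc,p_\Tc)\le\sup_{(\vb,r)}\Bfr(\vb,r)/N(\vb,r)$, so it remains to bound the data functional $\Bfr(\vb,r)=-\int_\Omega\fb_{\ub}\cdot\vb\,d\xb-\int_\Omega f_p\,r\,d\xb$ by $N(\vb,r)$ times the data norms. For the displacement source, Cauchy--Schwarz together with the discrete Poincar\'e inequality of Section~\ref{sec2} gives $\abs{\int_\Omega\fb_{\ub}\cdot\vb\,d\xb}\le C_P\norm{\fb_{\ub}}_0\norm{\vb}_\Tc\le C_P\norm{\fb_{\ub}}_0\,N(\vb,r)$. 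The one delicate point --- and the step I expect to be the main obstacle --- is the pressure source, because $N(\vb,r)$ controls only the fluctuation $\abs{r}_{\Tc,0}$ of $r$ and not its mean as $\rho,\tau\to0$. Here I would invoke the data compatibility anticipated in the introduction, namely that $f_p=O(\max(\rho,\tau,\nabla\cdot\ub))$ so that $f_p$ has vanishing mean in the incompressible limit; this lets me replace $r$ by its fluctuation in the integral and bound $\abs{\int_\Omega f_p\,r\,d\xb}\le\norm{f_p}_0\abs{r}_{\Tc,0}\le\norm{f_p}_0\,N(\vb,r)$. Combining the two estimates gives $N(\ub_\Tc,p_\Tc)\le C(\Theta^\Afr)^{-1}(\norm{\fb_{\ub}}_0+\norm{f_p}_0)$, which is the discrete counterpart of Definition~\ref{def1.1} with a constant independent of $\rho$, $\tau$ and $h$, thereby establishing robustness.

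Finally, for the eigenvalue claim I would argue that the uniform lower bound $\Theta^\Afr$ on the smallest singular value of $\Afr_\Dc$ forces, through Weyl's product inequalities relating singular values to eigenvalue moduli, the smallest eigenvalue in modulus to satisfy $\abs{\lambda_{\min}}\ge\Theta^\Afr$. Since $\Theta^\Afr$ does not degenerate as $(\rho,\tau,h)\to0$, all eigenvalues of the system $\Afr_\Dc(\ub,p,\vb,r;\rho,\tau)$ remain bounded away from zero uniformly, as asserted.
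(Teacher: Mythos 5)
The paper offers no proof of this corollary at all---it is stated immediately after Lemma~\ref{lem5.12} as a direct consequence---so your proposal is an elaboration of the intended one-line deduction rather than a departure from it. Your elaboration is essentially correct: the Babuska-type estimate \eqref{eq5.16} on a square finite-dimensional system gives injectivity, hence bijectivity with an inverse bounded uniformly in $(\rho,\tau,h)$ with respect to the composite norm $N$, and testing the discrete equations against the supremizing pair converts this into the a priori bound of Definition~\ref{def1.1}. Your identification of the pressure source term as the delicate point is a genuine contribution beyond what the paper records: as $\rho,\tau\to 0$ the test norm controls only $\abs{r}_{\Tc,0}$, so $\int_\Omega f_p\,r\,d\xb$ cannot be bounded by $N(\vb,r)$ without the compatibility $f_p=O(\max(\rho,\tau,\nabla\cdot\ub))$ that the paper only gestures at in the introduction; invoking it here is the right move and makes the robustness claim honest.

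Two small caveats. First, you do not need Weyl's product inequalities for the eigenvalue claim: if $\Afr_\Dc v=\lambda v$ then $\abs{\lambda}\,\norm{v}=\norm{\Afr_\Dc v}\ge\sigma_{\min}\norm{v}$ directly, which is both simpler and what you actually use. Second, and more substantively, the singular values you bound from below are those of the operator measured in the norm $N$, which itself depends on $\rho$, $\tau$ and $h$ and degenerates on the pressure component as $\rho,\tau\to 0$; translating the bound $\sigma_{\min}\ge\Theta^{\Afr}$ into a statement about the eigenvalues of the matrix $\afr_\Dc$ in the canonical basis picks up the norm-equivalence constants between $N$ and the Euclidean norm, which are not uniform. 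So the uniform eigenvalue bound should be read as a statement in the weighted geometry of $N$ (equivalently, nonsingularity of the system in all parameter regimes), not as a uniform spectral bound for the unscaled matrix. The corollary as stated in the paper shares this imprecision, so this is a clarification rather than an error in your argument.
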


{\makeatletter
\def\@begintheorem#1#2{\par\bgroup{{\itshape #1}\ #2. }\ignorespaces}
\makeatother
\begin{remark}\label{rem5.14}
Lemma~\ref{lem5.12} has a requirement on the grid, which states that
the asymmetry of the coupling terms induced by $\Lambda_\Dc$ must be
sufficiently small, relative to (essentially) the inf-sup constant. Since
$\Theta^\Lambda$ can be locally calculated based on the constants in
Definition~\ref{def5.5}, this condition is locally estimated at the
time of discretization assembly for general grids, and can be verified
\emph{a priori} for regular grids. We will return to this condition
in Section~\ref{sec6.3}. Furthermore, following comment~\ref{com3.4},
we note that for the symmetric discretization on simplex grids, local
conditions~\ref{conA} and \ref{conB} are automatically satisfied,
and $\Theta^\Lambda=0$.
\end{remark}
}

\subsection{Compactness results}\label{sec5.4}

The stability established in Lemma~\ref{lem5.12} will suffice to show
convergence after generalizing a few standard results for the uncoupled
discretizations:

\begin{definition}\label{def5.15}
We consider the following continuous extensions of the cell-average
finite volume gradients for discrete functions $\ub\in\Hbc_\Dc$ (and
equivalently for scalar functions $p\in\Hc_\Dc$):
\begin{equation}
\nabla_\Dc\ub(\xb)=(\widetilde{\nabla}\ub)_K
\quad\text{for }K\in\Tc,\text{ where }\xb\in K.
\label{eq5.24}
\end{equation}
Furthermore, we consider the continuous extension of the consistent
gradient
\begin{equation}
\bnabla_\Dc\ub(\xb)=(\bnabla\ub)_K^s
\quad\text{for $K\in\Tc$, $s\in\Tc$ where $\xb\in K_s$}.
\label{eq5.25}
\end{equation}
\end{definition}

\begin{lemma}\label{lem5.16}
Fix $\tau >0$, and let $\Dc_n$ be a family of regular discretization
triplets (in the sense that mesh parameters remain bounded) such
that $h_n\to 0$, as $n\to\infty$. Furthermore, let the conditions of
Lemma~\ref{lem5.12} hold. Then for all $n$, the solutions $(\ub_n,p_n)$
of equations \eqref{eq4.16}--\eqref{eq4.17} exist and are unique,
there exists $(\widetilde{\ub},\widetilde{p})\in(H^1 (\Omega))^d\times
H^1 (\Omega)$, and up to a subsequence (still denoted by $n$) $\Pi_\Tc
\ub_n\to \widetilde{\ub}$ and $\Pi_\Tc p_n\to \widetilde{p}$ converge in
$(L^q (\Omega))^d\times L^q (\Omega)$, for $q\in[1,2d/(d-2+\epsilon))$ as
$h_n\to 0$. Finally, the cell-average finite volume gradients $\nabla_\Dc
\ub_n$ and $\nabla_\Dc p_n$ converges weakly to $\nabla \widetilde{\ub}$
in $(L^2 (\Omega))^{d^2}$ and $\nabla \widetilde{p}$ in $(L^2 (\Omega))^d$,
respectively.
\end{lemma}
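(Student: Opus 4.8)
The plan is to derive uniform a priori bounds from the stability analysis, apply a discrete Rellich--Kondrachov compactness theorem to obtain strong $L^q$ convergence, and then identify the weak $L^2$ limits of the discrete gradients with the gradients of the limit functions, thereby also establishing the $H^1$ regularity of the limit. To begin, existence and uniqueness for each fixed $n$ follow at once: the problem of Definition~\ref{def4.4} is a finite-dimensional linear system, and Lemma~\ref{lem5.12} (equivalently Corollary~\ref{cor5.13}) shows that $\Afr_{\Dc_n}$ is injective with all eigenvalues bounded away from zero, so $(\ub_n,p_n)$ is well defined.

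Holding $\tau>0$ fixed, I would next invoke the robustness estimate of Definition~\ref{def1.1}, which Lemma~\ref{lem5.12} establishes with a constant independent of $(\rho,\tau,h)$. This yields
\[
\norm{\ub_n}_\Tc + \tau \norm{p_n}_\Tc \le C\bigl(\norm{\fb_{\ub}}+\norm{f_p}\bigr),
\]
so that $\norm{\ub_n}_\Tc$ and $\norm{p_n}_\Tc$ are bounded uniformly in $n$. These are discrete $H^1$-type bounds, and by the discrete Poincar\'e inequality of Section~\ref{sec2} the reconstructions $\Pi_\Tc\ub_n$ and $\Pi_\Tc p_n$ are uniformly bounded in $(L^2(\Omega))^d$ and $L^2(\Omega)$. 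The uniform $L^2$ bound on the cell-average gradients $\nabla_\Dc\ub_n$ and $\nabla_\Dc p_n$ follows from Definition~\ref{def5.15} and the control of $\widetilde{\nabla}$ by the discrete $H^1$-norm.

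The third step is to apply the discrete compactness theory of \cite{eym06}, as extended to the present framework in \cite{age10,nor15}: on a regular family of meshes with $h_n\to 0$, a sequence of piecewise-constant functions with uniformly bounded discrete $H^1$-norm is relatively compact in $L^q(\Omega)$ for $q$ in the stated subcritical range, and its cell-average gradient possesses a weakly $L^2$-convergent subsequence. Applying this to $\ub_n$ and $p_n$ produces, up to a common subsequence, strong limits $\widetilde{\ub}$ and $\widetilde{p}$ in the asserted $L^q$ spaces, together with weak $L^2$ limits of $\nabla_\Dc\ub_n$ and $\nabla_\Dc p_n$.

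The final and most delicate step is to identify these weak limits as $\nabla\widetilde{\ub}$ and $\nabla\widetilde{p}$, which simultaneously proves $\widetilde{\ub}\in(H^1(\Omega))^d$ and $\widetilde{p}\in H^1(\Omega)$. I would test $\nabla_\Dc\ub_n$ against a smooth compactly supported field $\boldsymbol{\varphi}$ and exploit the fact that the finite volume gradient $\widetilde{\nabla}$ of Definition~\ref{def3.1} is constructed by duality with the discrete divergence; summing over cells and using the outward-normal structure $\nb_{K,\sigma}$ produces a discrete integration-by-parts identity whose leading term is $-\int_\Omega \Pi_\Tc\ub_n\,\nabla\cdot\boldsymbol{\varphi}\,d\xb$. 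Passing to the limit with the strong convergence of $\Pi_\Tc\ub_n$ then forces the distributional gradient identity, and the homogeneous Dirichlet convention $u_{K,s}^{\sigma,\beta}=0$ on $\Gamma_D$ recovers the boundary values of the limit. The hard part is controlling the consistency error between the two gradient notions $\widetilde{\nabla}$ and $\bnabla$: the residual face contributions must be shown to telescope across internal subfaces (as in the jump manipulation of \eqref{eq5.13}) and to vanish as $h_n\to 0$, which requires that the weak-continuity penalty $e_\Dc$ and the piecewise-linear constraint $d_\Dc$ drive the interface jumps to zero at the appropriate rate. Once this consistency is in place, the limit identification, and hence the whole lemma, follow.
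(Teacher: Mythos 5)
Your proposal is correct and follows essentially the same route as the paper: the paper's proof is a one-line appeal to the stability of Lemma~\ref{lem5.12} combined with the discrete compactness arguments from the cited literature, which is exactly the chain (uniform discrete $H^1$ bounds $\to$ discrete Rellich--Kondrachov compactness $\to$ identification of the weak gradient limits) that you spell out in detail. The only difference is that you make explicit the steps the paper delegates to its references.
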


\begin{proof}
The proof follows from the stability of the scheme, Lemma~\ref{lem5.12},
and the compactness arguments detailed in \cite{gas08} and \cite{arn02}.
\end{proof}

\begin{lemma}\label{lem5.17}
Consider the same case as in Lemma~\ref{lem5.16}. Then the consistent
gradients $\bnabla_\Dc\ub_n$ and $\bnabla_\Dc p_n$ converges strongly
to $\nabla\widetilde{\ub}$ in $(L^2 (\Omega))^{d^2}$ and $\nabla
\widetilde{p}$ in $(L^2 (\Omega))^d$, respectively.
\end{lemma}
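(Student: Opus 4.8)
The plan is to leverage Lemma~\ref{lem5.16}, which already establishes weak convergence of the cell-average finite volume gradients $\nabla_\Dc\ub_n$ and $\nabla_\Dc p_n$ to the true gradients $\nabla\widetilde{\ub}$ and $\nabla\widetilde{p}$. The task is to upgrade convergence of the \emph{consistent} gradients $\bnabla_\Dc$ from weak to \emph{strong}. The standard technique for such an upgrade is to show convergence of the $L^2$ norms of the consistent gradients to the $L^2$ norm of the limiting gradient; combined with the weak convergence this gives strong convergence via the Hilbert-space identity $\norm{a_n-a}^2 = \norm{a_n}^2 - 2(a_n,a) + \norm{a}^2$, where the cross term converges by weak convergence and the norm-squared term converges by assumption.

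**First I would** establish weak convergence of $\bnabla_\Dc\ub_n$ to the same limit $\nabla\widetilde{\ub}$ as the finite-volume gradient. The key tool here is the piecewise-linear-consistency constraint enforced by equation~\eqref{eq3.13} together with the local interpolation well-posedness (Lemma~\ref{lem4.2}); these force the discrete solution to be locally consistent with the gradient $\bnabla$, so the two gradient notions agree asymptotically. I would use the relationship between the discrete norms established in Section~\ref{sec2} and the scaling estimates of Definition~\ref{def4.3}, in particular that the local interpolation operators $\Pi_{\FV,s}$ are stable, to show that $\norm{\bnabla_\Dc\ub_n - \nabla_\Dc\ub_n}_0 \to 0$. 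Since $\nabla_\Dc\ub_n \rightharpoonup \nabla\widetilde{\ub}$ weakly, this difference estimate yields $\bnabla_\Dc\ub_n \rightharpoonup \nabla\widetilde{\ub}$ as well.

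**Next I would** establish convergence of the energy norms. The natural route is to test the discrete system~\eqref{eq4.13}--\eqref{eq4.14} against the discrete solution itself, obtaining an identity relating the coercive energy terms (controlled by $a_{\Dc,s}$ and $c_{\Dc,s}$, whose symmetrized parts define exactly the semi-norms $\abs{\ub}_{a_{\Dc,s}}^2$ and $\abs{p}_{c_{\Dc,s}}^2$ appearing in Conditions~\ref{conA} and~\ref{conB}) to the right-hand side forcing. Passing to the limit in this discrete energy identity, and comparing with the continuous energy identity satisfied by the weak solution $(\widetilde{\ub},\widetilde{p})$ of~\eqref{eq3.1}, I would show $\lim_n \abs{\ub_n}_{a_\Dc}^2 = \int_\Omega \C{:}\nabla\widetilde{\ub}{:}\nabla\widetilde{\ub}\,d\xb$, and similarly for the pressure term. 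Because these energy norms are equivalent (via the coercivity of Conditions~\ref{conA}--\ref{conB}) to the $L^2$ norms of the consistent gradients, this gives convergence of $\norm{\bnabla_\Dc\ub_n}_0$ and $\norm{\bnabla_\Dc p_n}_0$ to the correct limiting values.

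**The hard part will be** the limit passage in the discrete energy identity, where one must handle the coupling terms and the nonstandard local-consistency operator $\Delta_\Dc$ carefully. The jump and averaging terms in the bilinear forms $d_\Dc$ and $e_\Dc$ must be shown to vanish in the limit, and one must verify that the weak limits of products (such as $\C{:}\bnabla_\Dc\ub_n$ tested against $\bnabla_\Dc\ub_n$) genuinely converge to the product of the limits. The subtlety is that a priori we only have weak convergence, so the product of two weakly convergent sequences need not converge to the product of limits; the resolution is precisely that the energy identity provides the missing norm convergence, and the local consistency constraints control the oscillatory face degrees of freedom. I would therefore appeal to the compactness machinery of~\cite{gas08,arn02} and the scaling arguments of~\cite{kla06,eym06} to close this step, treating the $h^2$-scaled term $\Delta_\Dc$ as asymptotically negligible by Lemma~\ref{lem5.4}.
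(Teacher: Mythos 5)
The paper does not prove this lemma at all: it simply cites \cite{age10} (scalar case) and \cite{nor15} (vector case), so any genuine argument you give is necessarily ``a different route.'' Your overall strategy --- weak convergence plus convergence of norms implies strong convergence in $L^2$ --- is indeed the idea underlying those references, but as written your proof has two gaps that would prevent it from closing.

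First, your step ``$\norm{\bnabla_\Dc\ub_n-\nabla_\Dc\ub_n}_0\to 0$'' is unsubstantiated and is essentially a strong-convergence statement in disguise; the two gradients act differently on the face unknowns and their difference on the discrete solution is not controlled by the stability estimate alone. In \cite{age10} the weak convergence of the consistent gradient is obtained by a separate duality/consistency argument (testing against smooth functions and summing by parts, using that both gradients are exact on interpolants of linears), not by a norm-difference estimate. Second, and more seriously, the energy-identity step does not deliver $\lim_n\abs{\ub_n}_{a_\Dc}^2=\int_\Omega\C{:}\nabla\widetilde{\ub}{:}\nabla\widetilde{\ub}\,d\xb$. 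Testing the scheme with the solution produces $a_\Dc(\ub_n,\Pi_\Cc\ub_n)$, in which the \emph{consistent} gradient sits on one slot and the \emph{finite volume} gradient on the other (see \eqref{eq3.5}); Conditions~\ref{conA}--\ref{conB} relate this quantity to $\abs{\ub_n}_{a_\Dc}^2$ only through a one-sided coercivity inequality, so you obtain at best a $\limsup$ bound, not the equality of norms your Hilbert-space identity requires. The way the cited references close this is to apply the coercivity inequality not to $\ub_n$ itself but to $\ub_n-\Pi_{\FV}\Pi_\Tc\varphi$ for a smooth comparison function $\varphi$, expand by bilinearity, pass to the limit using the scheme, the weak convergences, and the consistency limits \eqref{eq5.26}--\eqref{eq5.29}, and then let $\varphi\to\widetilde{\ub}$ by density; this argument works entirely with inequalities and avoids needing exact norm convergence. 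You would also need to carry the coupling terms $B_{\Dc,1}$, $B_{\Dc,2}^T$, $\Lambda_\Dc$ and $\Delta_\Dc$ through that expansion, which you correctly flag as delicate but do not resolve.
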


\begin{proof}
The proof is stated in references \cite{age10} (scalar case) and
\cite{nor15} (vector case).
\end{proof}

{\makeatletter
\def\@begintheorem#1#2{\par\bgroup{{\itshape #1}\ #2. }\ignorespaces}
\makeatother
\begin{remark}\label{rem5.18}
In the limiting case $\tau =0$, the regularity of pressure is reduced,
such that $\widetilde{p}\in L^2 (\Omega)$. Consequently, we do not discuss
convergence of $\nabla_\Dc p_n$ and $\bnabla_\Dc p_n$ in this case.
\end{remark}
}

\subsection{Convergence}\label{sec5.5}

We summarize the results of this section in the main convergence result
for the method.
\begin{theo}\label{the1}
Let $\Dc_n$ be a family of regular discretization triplets
(in the sense that mesh parameters remain bounded) such that
$h_n\to 0$, as $n\to\infty$, and let Conditions~\ref{conA},
\ref{conB} and \ref{conC} hold with constants independent of $n$,
and let $8\Theta^\Lambda<\Theta^B$. Then for $\tau >0$ the limit
$(\widetilde{\ub},\widetilde{p})\in(H^1 (\Omega))^d\times H^1 (\Omega)$
of the discrete mixed variational problem \eqref{eq4.16}--\eqref{eq4.17},
and consequently the MPSA/MPFA discretizations for the Biot equations,
is the unique weak solution of the Biot problem \eqref{eq1.1}.
\end{theo}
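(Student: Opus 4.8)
The plan is to establish convergence through the classical triad of \emph{a priori} stability, compactness, and consistency, closing the argument with uniqueness of the continuous limit. First I would note that the stability estimate of Lemma~\ref{lem5.12} holds with a constant $\Theta^\Afr$ independent of $n$ (and hence of $h_n$, $\rho$, and $\tau$); applied to the actual right-hand side it yields, uniformly in $n$, the bound
\[
\norm{\ub_n}_\Tc^2+\tau \norm{p_n}_\Tc^2+\rho \norm{p_n}_{\Tc,0}^2+\abs{p_n}_{\Tc,0}^2\le C,
\]
with $C$ depending only on the data. These bounds are precisely what is required to invoke the compactness results of Lemmas~\ref{lem5.16} and~\ref{lem5.17}: up to a subsequence the reconstructions $\Pi_\Tc\ub_n$ and $\Pi_\Tc p_n$ converge in $L^q$ to limits $(\widetilde{\ub},\widetilde{p})\in(H^1(\Omega))^d\times H^1(\Omega)$, the finite volume gradients $\nabla_\Dc\ub_n$, $\nabla_\Dc p_n$ converge weakly to $\nabla\widetilde{\ub}$, $\nabla\widetilde{p}$ in $L^2$, and, crucially, the consistent gradients $\bnabla_\Dc\ub_n$, $\bnabla_\Dc p_n$ converge \emph{strongly} to the same limits.

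The heart of the proof is to pass to the limit in the cell-centered equations \eqref{eq4.13}--\eqref{eq4.14}. I would fix smooth test functions $(\boldsymbol{\phi},\psi)$ vanishing on the respective Dirichlet boundaries, use their nodal interpolants $(\vb_n,r_n)=(\Pi_\Tc\boldsymbol{\phi},\Pi_\Tc\psi)$ as discrete test functions, and treat each bilinear form in turn. The symmetric principal parts are the most transparent: since $A_\Dc$ and $C_\Dc$ pair the strongly convergent consistent gradient of the trial function against the finite volume gradient of a smooth interpolant (which converges to the true gradient by consistency of $\widetilde{\nabla}$), a product of strongly convergent factors gives $A_\Dc(\ub_n,\vb_n)\to(\C:\nabla\widetilde{\ub},\nabla\boldsymbol{\phi})$ and $\tau C_\Dc(p_n,r_n)\to\tau(k\nabla\widetilde{p},\nabla\psi)$. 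The zero-order mass term passes to the limit by $L^2$ convergence of $\Pi_\Tc p_n$. For the coupling I would use the reformulation \eqref{eq4.17}: the divergence term $B_{\Dc,1}(\ub_n,r_n)$ converges to $-(\nabla\cdot\widetilde{\ub},\alpha\psi)$ via strong convergence of $\bnabla_\Dc\ub_n$, the transpose part of $B_{\Dc,2}^T$ converges to $-(\alpha\widetilde{p},\nabla\cdot\boldsymbol{\phi})$, and the asymmetry $\Lambda_\Dc(p_n,\vb_n)$ vanishes on smooth interpolants — its first piece is $O(h)$ by Definition~\ref{def5.5}, while its second piece is the difference of the two discrete divergences of $\Pi_{\FV}^{\ub,\ub}\vb_n$, both of which converge to $\nabla\cdot\boldsymbol{\phi}$. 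Finally, the local consistency operator satisfies $\abs{\Delta_\Dc(p_n,r_n)}=O(h)$ by Lemma~\ref{lem5.4} together with the $h$-scaling of $\Pi_{\FV}^{\ub,p}$ in \eqref{eq4.10}, so it too disappears. Passing to the limit thus reproduces exactly the variational system \eqref{eq3.1}, with the signs matching \eqref{eq3.1a}--\eqref{eq3.1b}.

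To conclude, I would invoke density of smooth functions in $(H^1(\Omega))^d\times H^1(\Omega)$ to extend the limiting identities to all admissible test functions, so that $(\widetilde{\ub},\widetilde{p})$ is a weak solution of \eqref{eq1.1}. Since that weak solution is unique under the stated hypotheses, a standard subsequence argument — every subsequence admits a further subsequence converging to the same unique limit — upgrades the convergence to the full sequence and identifies the limit unambiguously.

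I expect the main obstacle to lie in the coupling terms, and specifically in the dual role of the novel operator $\Pi_{\FV}^{\ub,p}$: the term $\Delta_\Dc$ that it generates is indispensable for the stability of Lemma~\ref{lem5.12}, yet it must vanish in the limit for the scheme to be consistent, so one must check that its favorable $h$-scaling is genuinely compatible with the correct convergence of $B_{\Dc,2}^T$. The delicate point is that $\widetilde{\nabla}$ and $\bnabla$ disagree, so the product structure of the coupling must be tracked precisely to ensure that the limits of $B_{\Dc,1}^T$ and $\Lambda_\Dc$ recombine into the single continuous pairing $(\alpha\widetilde{p},\nabla\cdot\boldsymbol{\phi})$ without leaving a spurious residue; here the strong convergence furnished by Lemma~\ref{lem5.17} is exactly what rescues the argument.
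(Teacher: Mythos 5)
Your proposal follows essentially the same path as the paper's proof: the stability of Lemma~\ref{lem5.12} feeds the compactness results of Lemmas~\ref{lem5.16}--\ref{lem5.17} to produce the limit $(\widetilde{\ub},\widetilde{p})$, consistency is then verified term by term on smooth test functions (the paper's relations \eqref{eq5.26}--\eqref{eq5.32}), and density of $C^\infty$ plus uniqueness of the continuous weak solution closes the argument. Your handling of the coupling via the decomposition \eqref{eq4.17} into $B_{\Dc,1}^T+\Lambda_\Dc$ is merely a rearrangement of the paper's direct consistency statement \eqref{eq5.28}, and your observation that $\Delta_\Dc$ vanishes in the limit is exactly the paper's \eqref{eq5.30}.
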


\begin{proof}
Lemmas~\ref{lem5.16} and \ref{lem5.17} establish that the
limit $(\widetilde{\ub},\widetilde{p})\in(H^1 (\Omega))^d\times
H^1 (\Omega)$ exists, and the appropriate notion of convergence of
the discrete gradients. It remains to show that the solution pair
$(\widetilde{\ub},\widetilde{p})$ is consistent with the weak form of
problem \eqref{eq1.1}. Uniqueness then follows from the uniqueness
of weak solutions to \eqref{eq1.1}. We show consistency by projecting
continuous functions into the discrete spaces, and note that we suppress
the dependence of $\Pi_\Tc$ and $\Pi_{\FV}$ on $n$:

For all $\ub,\vb\in(C^\infty (\Omega))^d$
\begin{equation}
\lim_{n\to\infty}a_{\Dc_n} (\Pi_{\FV}^{\ub,\ub} \Pi_\Tc \ub,\vb)=\int_\Omega
(\C:\nabla\ub):\nabla\vb\,d\xb
\label{eq5.26}
\end{equation}

For all $\ub\in(C^\infty (\Omega))^d$ and $r\in C^\infty (\Omega)$
\begin{equation}
\lim_{n\to\infty}b_{\Dc_n,1} (\Pi_{\FV}^{\ub,\ub} \Pi_\Tc\ub,r)=-\int_\Omega
r \nabla\cdot\ub \,d\xb
\label{eq5.27}
\end{equation}

For all $\vb\in(C^\infty (\Omega))^d$ and $p\in C^\infty (\Omega)$
\begin{equation}
\lim_{n\to\infty}a_{\Dc_n} (\Pi_{\FV}^{\ub,p} \Pi_\Tc p,\vb)=-\int_\Omega
p \nabla\cdot\vb \,d\xb
\label{eq5.28}
\end{equation}

For all $p,r\in C^\infty (\Omega)$
\begin{equation}
\lim_{n\to\infty}c_{\Dc_n} (\Pi_{\FV}^p \Pi_\Tc p,r)=-\int_\Omega
(k\nabla p)\cdot \nabla r \,d\xb
\label{eq5.29}
\end{equation}

For all $p,r\in C^\infty (\Omega)$
\begin{equation}
\lim_{n\to\infty}b_{\Dc_n,1} (\Pi_{\FV}^{\ub,p} \Pi_\Tc p,r)=0
\label{eq5.30}
\end{equation}

For all $\vb\in(C^\infty (\Omega))^d$
\begin{equation}
\lim_{n\to\infty}\int_\Omega \fb_{\ub}\cdot \Pi_\Tc\vb \,d\xb=\int_\Omega
\fb_{\ub} \cdot\vb \,d\xb
\label{eq5.31}
\end{equation}

For all $r\in(C^\infty (\Omega))^d$
\begin{equation}
\lim_{n\to\infty}\int_\Omega f_p\cdot \Pi_\Tc r \,d\xb=\int_\Omega
f_p\cdot r \,d\xb
\label{eq5.32}
\end{equation}

Since the right-hand sides of equations \eqref{eq5.25}--\eqref{eq5.32} form
the weak form of equations \eqref{eq1.1}, and since $C^\infty$ is dense
in $H^1$, it follows that the limit $(\widetilde{\ub},\widetilde{p})$
is a weak solution to \eqref{eq1.1}.
\end{proof}

\subsection{Fully time-discrete scheme}\label{sec5.6}

From equations \eqref{eq1.1} and \eqref{eq4.15}, we note that the
time-iterative scheme for the solution $(\ub_\Tc^j,p_\Tc^j)$ at time-step
$j$, written in terms of the solution at time-step $j-1$ as as
\begin{multline}
\Afr_\Dc (\ub_\Tc^j,p_\Tc^j,\vb,r;\rho,\tau)=
\Afr_\Dc (\ub_\Tc^{j-1},p_\Tc^{j-1},\vb,r;\rho,0) + \Bfr(\Ob,r)
\\
\text{for all } (\vb,r)\in\Hbc_\Tc\times \Hc_\Tc
\label{eq5.33}
\end{multline}
In terms of linear systems, this is equivalent to the matrix system
\[
\afr_\Dc (\rho,\tau)\cdot
\begin{pmatrix}
\ub_\Tc^j\\[2pt]
p_\Tc^j
\end{pmatrix}
= \afr_\Dc (\rho,0)\cdot
\begin{pmatrix}
\ub_\Tc^{j-1}\\[2pt]p_\Tc^{j-1}
\end{pmatrix}
+ \bfr\cdot
\begin{pmatrix}
\Ob\\[2pt]
r
\end{pmatrix}
\]
Where the matrix $\afr_\Dc (\rho,\tau)$ corresponds to the values of
$\Afr_\Dc (\ub,p,\vb,r;\rho,\tau)$ with $(\ub,p,\vb,r)$ chosen as the
canonical basis. Define any error as
{\emergencystretch5pt
\[
e_\Tc^j=
\begin{pmatrix}
\ub_\Tc^j-\ub^j\\[2pt]
p_\Tc^j-p^j
\end{pmatrix}
\]
}%
Then from Lemma~\ref{lem5.12}, and the Poincar\'e inequality, it follows
that for $\tau >0$, we obtain a stable time-discretization since
\[
\sup_{e_\Tc^{j-1}}\frac{\norm{e_\Tc^j}_0}{\norm{e_\Tc^{j-1}}_0} =
\sup_{e_\Tc^{j-1}}\frac{\norm{\afr_\Dc^{-1}(\rho,\tau)\cdot
\afr_\Dc (\rho,0)\cdot e_\Tc^{j-1}}_0}{\norm{e_\Tc^{j-1}}_0} =
\norm{\afr_\Dc^{-1}(\rho,\tau)\cdot \afr_\Dc(\rho,0)}<1
\]
Thus, the time-discrete problem is also stable, independent of time-step,
and convergent due to the consistency of the first-order truncation error
in backward Euler (implicit) discretization (see e.g.\ \cite{lev02}).

\section{Numerical validation}\label{sec6}

Herein, we restrict our attention towards verifying the claims that
the method is A) robust according to Definition~\ref{def1.1}, B)
is stable as stated in Lemma~\ref{lem5.12}, and C) is convergent
as stated in Theorem~\ref{the1}. We refer to the works on related
methods to give a general impression of this class of methods for
complex grids \cite{nor14a,kla12} and solutions with low regularity
\cite{eig05}. Together, the above references give numerical evidence of
the suitability of the individual discretizations $A_\Dc$ and $C_\Dc$.

\subsection{Problem formulation}\label{sec6.1}

As a test problem, we consider the unit square in two spatial
dimensions. We chose as an analytical solution the function
\begin{equation}
\ub(x_1,x_2)=
\begin{pmatrix}
x_1 (1-x_1)\sin(2\pi x_2)\\
\sin(2\pi x_1)\sin(2\pi x_2)
\end{pmatrix}
\label{eq6.1}
\end{equation}
and $p=\ub\cdot\eb_1$. These functions satisfy zero Dirichlet boundary
conditions, and the problem is thus driven by internal source-sink terms
(for the flow equation) and body-forces (for the momentum equation). We
calculate these right-hand side functions analytically according to
equations \eqref{eq1.1}, using unit permeability and Lam\'e parameters,
in the sense that we consider an isotropic material with the properties
\[
\C:\nabla\ub=\nabla\ub+\nabla\ub^T+(\nabla\cdot u)\Ib
\]
This implies a Poisson ratio of $0.25$, which is within the typical
range of natural materials.

We consider three types of grids. Type A is a standard Cartesian grid,
Type B is a simplex grid obtained by bisecting the Cartesian grid,
while Type C is dual-simplex grid obtained by taking the dual grid to
the corresponding Type B grid. Type C grids are related to PEBI grids
and are representative of unstructured grids. For grid types A and C
we use the general method with second-order quadrature for evaluating
the jump terms, while for grid type B we use the simplified symmetric
discretization as discussed in Comment~\ref{com3.4}.

For the grid refinement, we consider $7$ refinement levels, where the
grid vertexes at each refinement level are perturbed randomly by a
factor between $\pm h/2$. Thus we consider so-called ``rough'' grids
\cite{kla06}, without the milder assumption that the grid asymptotically
is only $h^2$ perturbed, the latter of which is typical for methods
derived from mixed finite element using numerical quadrature (see e.g.\
\cite{whe06}). These rough grids are representative of grid regularity
as experienced in geological subsurface applications.

The grids are illustrated in Figure~\ref{fig1}, together with the
structure of the analytical solution \eqref{eq6.1}. For all figures,
the third refinement level is shown.

\begin{figure}[tbh!]
\centering
\includegraphics[width=0.32\textwidth]{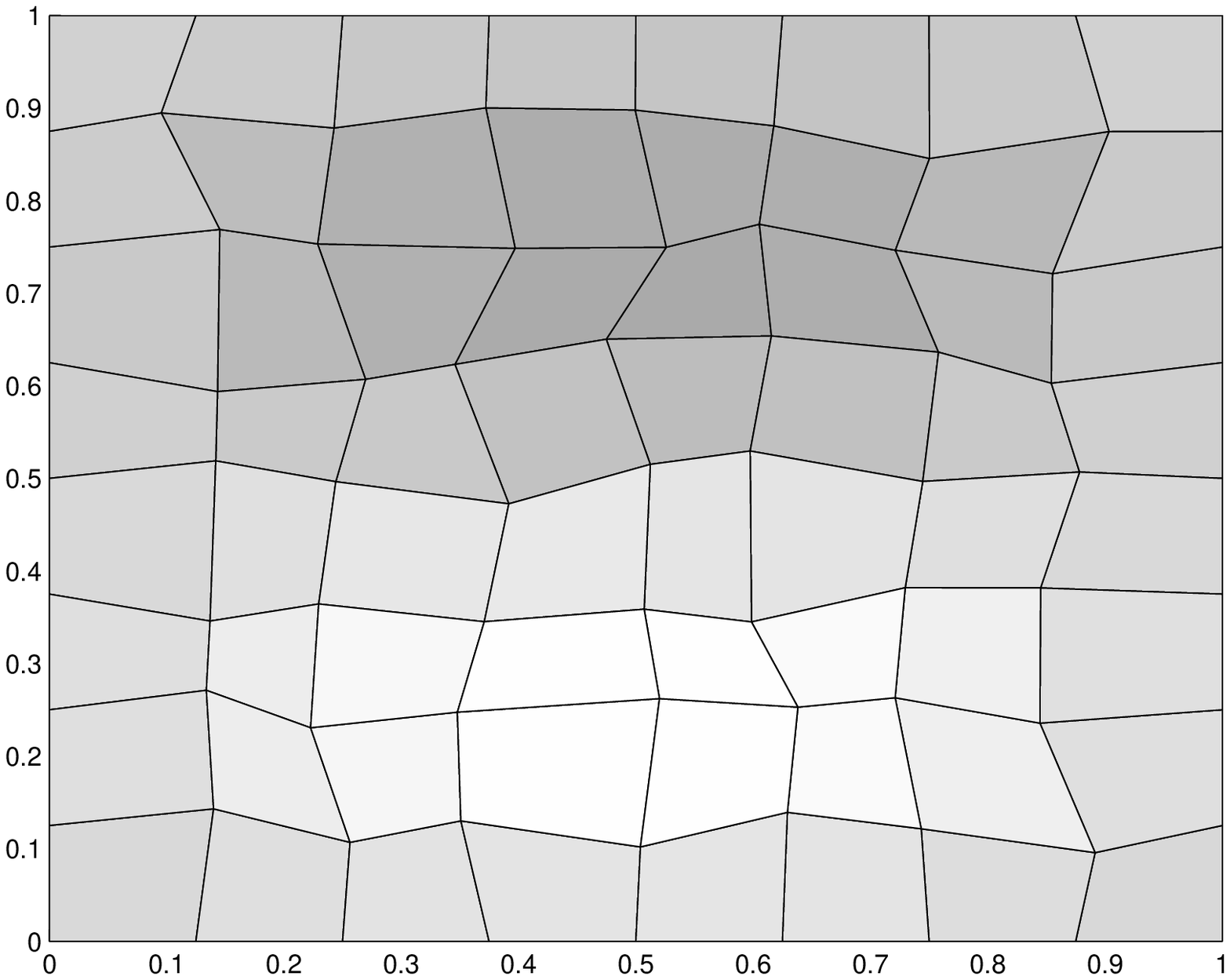}
\includegraphics[width=0.32\textwidth]{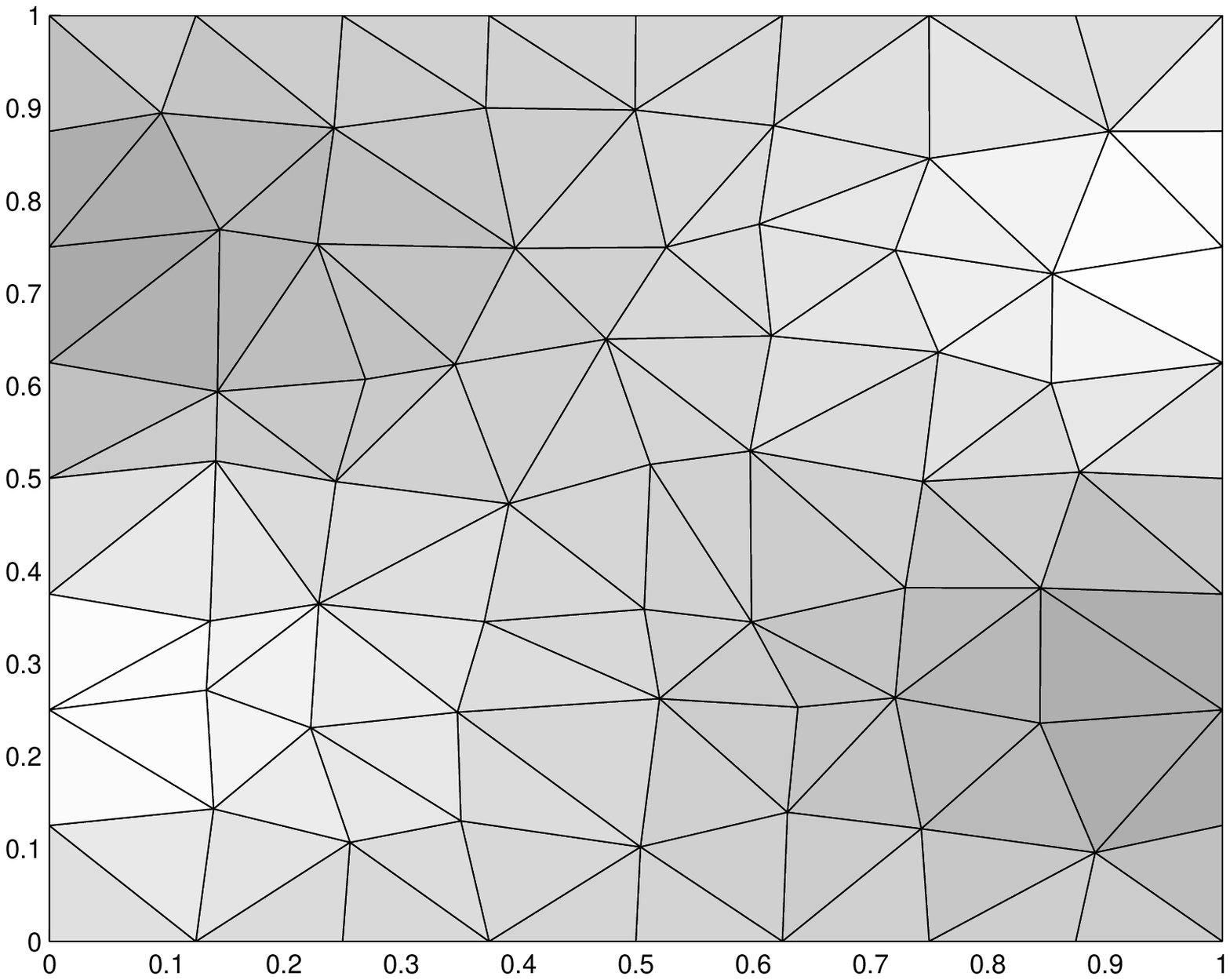}
\includegraphics[width=0.32\textwidth]{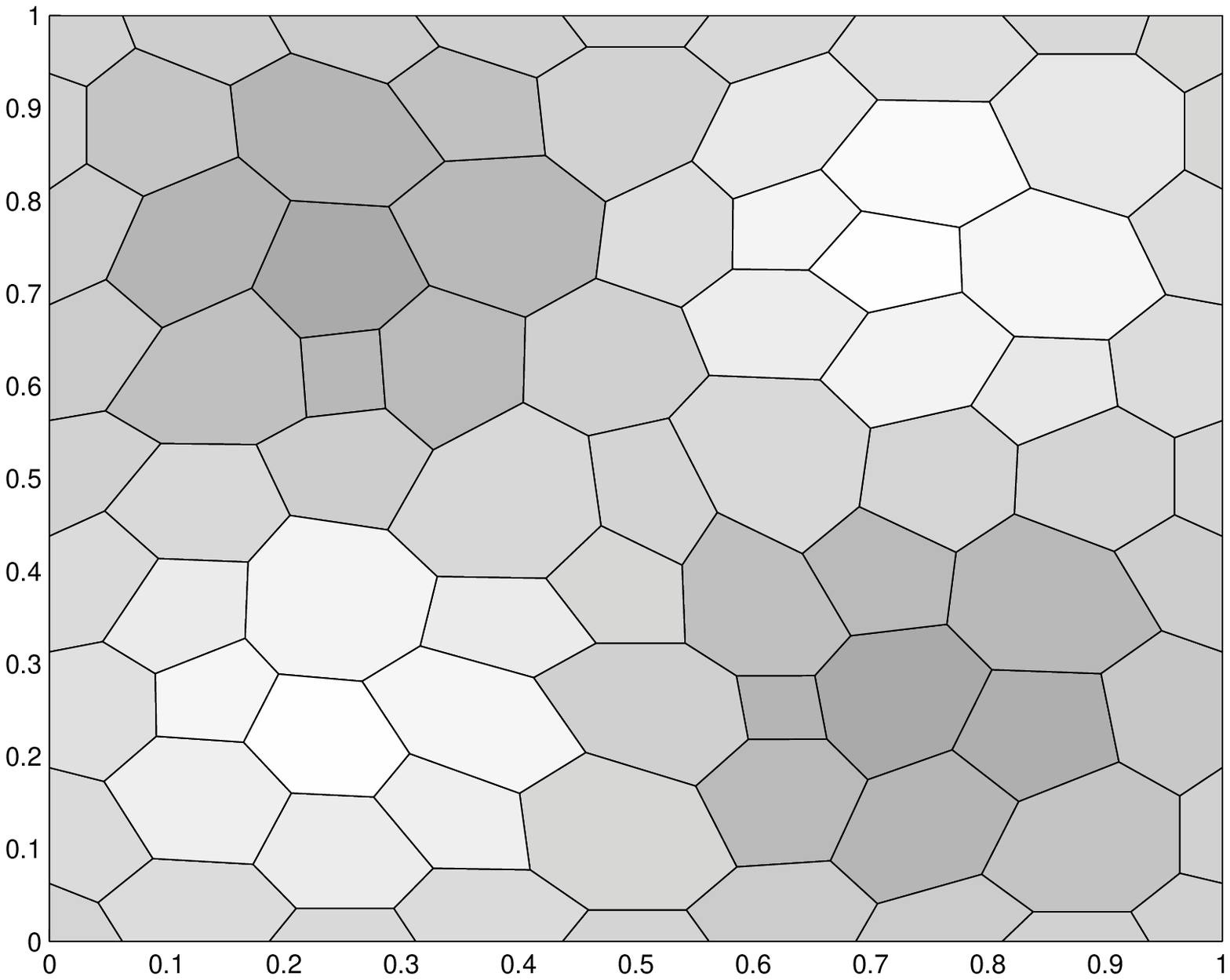}
\caption{From left to right the figures illustrate grid types A
(quadrilaterals), B (triangles), C (unstructured grids). Furthermore,
in grey-scale, the figures indicate the structure of the analytical
solution from Equation~\eqref{eq6.1}, respectively first component of
displacement vector (which is equal to pressure), first component of
pressure gradient, and second component of displacement vector.}
\label{fig1}
\end{figure}

\subsection{Convergence results}\label{sec6.2}

In the reported results, we consider errors using the following $L^2$
type metrics:
\begin{align}
\epsilon_{\ub}&=\frac{\norm{\ub_\Tc-\Pi_\Tc\ub}_{\Tc,0}}{\norm{\Pi_\Tc\ub}_{\Tc,0}}\label{eq6.2}\\
\epsilon_p&=\frac{\norm{p_\Tc-\Pi_\Tc p}_{\Tc,0}}{\norm{\Pi_\Tc
p}_{\Tc,0}}\label{eq6.3}\\
\epsilon_{p,|}&=\frac{\abs{p_\Tc-\Pi_\Tc p}_{\Tc,0}}{\norm{\Pi_\Tc
p}_{\Tc,0}}\label{eq6.4}\\
\epsilon_{\pib}&=\frac{\sum_{\sigma \in\Fc}m_\sigma^2 [\Tb_K^\sigma
-\pib(\xb_\sigma)\cdot\nb_{K,\sigma}]^2}
{\sum_{\sigma \in\Fc}m_\sigma^2
[\pib(\xb_\sigma)\cdot\nb_{K,\sigma}]^2}\label{eq6.5}\\
\epsilon_q&=\frac{\sum_{\sigma \in\Fc}m_\sigma^2
[q_K^\sigma-\qb(\xb_\sigma)\cdot\nb_{K,\sigma}]^2}
{\sum_{\sigma \in\Fc}m_\sigma^2
[\qb(\xb_\sigma)\cdot\nb_{K,\sigma}]^2}\label{eq6.6}
\end{align}
Here the discrete flux and traction, are defined in equations
\eqref{eq3.16} and \eqref{eq3.19}, utilizing that due to equations
\eqref{eq3.17} and \eqref{eq3.20} we can (up to the sign) evaluate
the expression for either $K\in\Tc_\sigma$. In order to show stability
according to Definition~\ref{def1.1}, and verify Lemma~\ref{lem5.12},
we combine the above errors to the \emph{stable error}
\begin{equation}
\epsilon_\Sigma
=\epsilon_{\ub}+\epsilon_{\pib}+(\tau+\rho)\epsilon_p+\tau\epsilon_q+\epsilon_{p,|}
\label{eq6.7}
\end{equation}
In order to illustrate the numerical convergence rate of the primary
variables, we give the \emph{primary error} associated with the primary
variables displacement and pressure as
\begin{equation}
\epsilon_{\ub,p}=\epsilon_{\ub}+\tau\epsilon_p
\label{eq6.8}
\end{equation}
Note that the primary error does not provide control on pressure or flux
in the limit of $\tau\to 0$, we include it in order to highlight better
spatial convergence rates.

For the convergence study, we consider all combinations of
$(\rho,\tau)\in[1,10^{-2},10^{-4},10^{-6}]^2$. All calculations are
performed using Matlab on a standard laptop. The full study contains
$3\times 7\times 5\times 5=525$ calculations, the results of which are
summarized in tables below.

In tables~\ref{tab1} and \ref{tab2} we give the results for the stable
error $\epsilon_\Sigma$ on rough grids. As expected the discretization
is robust, and we obtain (better than) $1^\text{st}$ order convergence independent
of small parameters. Moreover, the error constant is bounded as expected
from Lemma~\ref{lem5.12}.

In tables~\ref{tab3} and \ref{tab4} we give the results for the primary
error $\epsilon_{\ub,p}$ on rough grids. For these results we observe
$2^\text{nd}$ order convergence, again with constant independent of the
small parameters.

We summarize the results of tables~\ref{tab1} through \ref{tab4}
heuristically as:

\begin{statement}\label{sta6.1}
For $h$-perturbed grids the MPFA/MPSA-FV method displays a numerical
convergence following
\begin{equation}
h^{-1} \epsilon_\Sigma +h^{-2} \epsilon_{\ub,p}<C
\label{eq6.9}
\end{equation}
where the constant $C$ does not depend on $\tau$ or $\rho$.
\end{statement}

\subsection{Local conditions}\label{sec6.3}

For all grids in the test suite, we considered the local
conditions~\ref{conA}, \ref{conB}, and \ref{conC}, as well as
the asymmetry condition $8\Theta^\Lambda<\Theta^B$ arising in
Lemma~\ref{lem5.12}. As noted in remark~\ref{rem5.14}, all but
condition~\ref{conC} are immediate on simplex grids. Furthermore, for all
grid of type A and C, the local conditions were satisfied, testifying to
the statement that these conditions are not restrictive in practice. We
emphasize the fact that the local conditions are independent of the
parameters $\rho$ and~$\tau$.

However, this is not to imply that there do not exist grids where
the local conditions are not satisfied. Indeed, similar expressions
as Condition~\ref{conB}, which pertains to the local coercivity of the
discretization of the flow equations, can be violated on severely skewed
parallelograms, as discussed previously \cite{kla06} which can be linked
to a loss of monotonicity of the scheme \cite{nor07}.

\begin{table}[t!]
\caption{Asymptotic convergence rate of stable error $\epsilon_\Sigma$
for grids of types A, B, and C.}\label{tab1}
\centering
\footnotesize
\begin{tabular}{@{}c@{\hskip.8ex}
*{3}{@{\hskip1ex}c@{\hskip1ex}}@{\hskip1ex}
*{3}{@{\hskip1ex}c@{\hskip1ex}}@{\hskip1ex}
*{3}{@{\hskip1ex}c@{\hskip1ex}}@{\hskip1ex}
*{3}{@{\hskip1ex}c@{\hskip1ex}}@{\hskip1ex}
*{3}{@{\hskip1ex}c@{}}}
\toprule
$\epsilon_\Sigma$ & \multicolumn{3}{c}{$\tau =1$} &
\multicolumn{3}{c}{$\tau =10^{-1}$} & \multicolumn{3}{c}{$\tau =10^{-2}$}
& \multicolumn{3}{c}{$\tau =10^{-4}$} & \multicolumn{3}{c}{$\tau
=10^{-6}$}\\
\midrule
Grid & A & B & C & A & B & C & A & B & C & A & B & C & A & B & C\\
\midrule
$\rho =1$ & $1.36$ & $1.36$ & $1.27$ & $1.45$ & $1.51$ & $1.48$ & $1.63$
& $1.68$ & $1.75$ & $2.01$ & $1.90$ & $1.97$ & $1.09$ & $1.14$ & $1.16$\\
$\rho =10^{-1}$ & $1.33$ & $1.32$ & $1.23$ & $1.40$ & $1.46$ & $1.41$
& $1.64$ & $1.66$ & $1.73$ & $2.16$ & $1.89$ & $2.02$ & $1.17$ & $1.25$
& $1.25$\\
$\rho =10^{-2}$ & $1.32$ & $1.32$ & $1.23$ & $1.40$ & $1.45$ & $1.40$
& $1.64$ & $1.66$ & $1.73$ & $2.16$ & $1.88$ & $2.00$ & $1.20$ & $1.29$
& $1.28$\\
$\rho =10^{-4}$ & $1.32$ & $1.32$ & $1.23$ & $1.40$ & $1.45$ & $1.40$
& $1.64$ & $1.66$ & $1.73$ & $2.16$ & $1.88$ & $2.00$ & $1.20$ & $1.29$
& $1.29$\\
$\rho =10^{-6}$ & $1.32$ & $1.32$ & $1.23$ & $1.40$ & $1.45$ & $1.40$
& $1.64$ & $1.66$ & $1.73$ & $2.16$ & $1.88$ & $2.00$ & $1.20$ & $1.29$
& $1.29$\\
\bottomrule
\end{tabular}
\end{table}

\begin{table}[t!]
\caption{Asymptotic stable error $\epsilon_\Sigma$ for grids of types A,
B and C.}\label{tab2}
\centering
\footnotesize
\begin{tabular}{@{}c@{\hskip.8ex}
*{3}{@{\hskip1ex}c@{\hskip1ex}}@{\hskip1ex}
*{3}{@{\hskip1ex}c@{\hskip1ex}}@{\hskip1ex}
*{3}{@{\hskip1ex}c@{\hskip1ex}}@{\hskip1ex}
*{3}{@{\hskip1ex}c@{\hskip1ex}}@{\hskip1ex}
*{3}{@{\hskip1ex}c@{}}}
\toprule
$\epsilon_\Sigma $ & \multicolumn{3}{c}{$\tau =1$} &
\multicolumn{3}{c}{$\tau =10^{-1}$} & \multicolumn{3}{c}{$\tau =10^{-2}$}
& \multicolumn{3}{c}{$\tau =10^{-4}$} & \multicolumn{3}{c}{$\tau
=10^{-6}$}\\
\midrule
Grid & A & B & C & A & B & C & A & B & C & A & B & C & A & B & C\\
\midrule
$\rho =1$ & $.006$ & $.005$ & $.006$ & $.006$ & $.006$ & $.005$ & $.009$
& $.012$ & $.010$ & $.029$ & $.034$ & $.022$ & $.092$ & $.121$ & $.062$\\
$\rho =10^{-1}$ & $.006$ & $.005$ & $.006$ & $.005$ & $.005$ & $.005$
& $.009$ & $.012$ & $.010$ & $.026$ & $.040$ & $.026$ & $.133$ & $.171$
& $.088$\\
$\rho =10^{-2}$ & $.006$ & $.005$ & $.006$ & $.005$ & $.005$ & $.004$
& $.009$ & $.012$ & $.010$ & $.026$ & $.043$ & $.029$ & $.146$ & $.187$
& $.096$\\
$\rho =10^{-4}$ & $.006$ & $.005$ & $.006$ & $.005$ & $.005$ & $.004$
& $.009$ & $.012$ & $.010$ & $.027$ & $.044$ & $.029$ & $.148$ & $.189$
& $.098$\\
$\rho =10^{-6}$ & $.006$ & $.005$ & $.006$ & $.005$ & $.005$ & $.004$
& $.009$ & $.012$ & $.010$ & $.027$ & $.044$ & $.029$ & $.148$ & $.189$
& $.098$\\
\bottomrule
\end{tabular}
\end{table}

\begin{table}[t!]
\caption{Asymptotic convergence rate of primary error $\epsilon_{\ub,p}$
for grids of types A, B and C.}
\label{tab3}
\footnotesize
\centering
\begin{tabular}{@{}c@{\hskip.8ex}
*{3}{@{\hskip1ex}c@{\hskip1ex}}@{\hskip1ex}
*{3}{@{\hskip1ex}c@{\hskip1ex}}@{\hskip1ex}
*{3}{@{\hskip1ex}c@{\hskip1ex}}@{\hskip1ex}
*{3}{@{\hskip1ex}c@{\hskip1ex}}@{\hskip1ex}
*{3}{@{\hskip1ex}c@{}}}
\toprule
$\epsilon_{\ub,p}$ & \multicolumn{3}{c}{$\tau =1$} &
\multicolumn{3}{c}{$\tau =10^{-1}$} & \multicolumn{3}{c}{$\tau =10^{-2}$}
& \multicolumn{3}{c}{$\tau =10^{-4}$} & \multicolumn{3}{c}{$\tau
=10^{-6}$}\\
\midrule
Grid & A & B & C & A & B & C & A & B & C & A & B & C & A & B & C\\
\midrule
$\rho =1$ & $2.00$ & $1.97$ & $1.99$ & $1.98$ & $1.93$ & $1.98$ & $1.99$
& $1.94$ & $1.98$ & $1.99$ & $1.95$ & $1.99$ & $1.99$ & $1.95$ & $1.98$\\
$\rho =10^{-1}$ & $2.00$ & $1.97$ & $1.99$ & $1.98$ & $1.92$ & $1.98$
& $1.99$ & $1.93$ & $1.97$ & $1.99$ & $1.94$ & $1.98$ & $1.98$ & $1.94$
& $1.98$\\
$\rho =10^{-2}$ & $2.00$ & $1.97$ & $1.99$ & $1.98$ & $1.92$ & $1.98$
& $1.99$ & $1.93$ & $1.97$ & $1.98$ & $1.93$ & $1.97$ & $1.98$ & $1.94$
& $1.98$\\
$\rho =10^{-4}$ & $2.00$ & $1.97$ & $1.99$ & $1.98$ & $1.92$ & $1.98$
& $1.99$ & $1.93$ & $1.97$ & $1.98$ & $1.93$ & $1.97$ & $1.98$ & $1.94$
& $1.98$\\
$\rho =10^{-6}$ & $2.00$ & $1.97$ & $1.99$ & $1.98$ & $1.92$ & $1.98$
& $1.99$ & $1.93$ & $1.97$ & $1.98$ & $1.93$ & $1.97$ & $1.98$ & $1.94$
& $1.98$\\
\bottomrule
\end{tabular}
\end{table}

\begin{table}[t!]
\caption{Asymptotic primary error $\epsilon_{\ub,p}$ for grids of types A,
B and C. All numbers are scaled by $10^{-3}$.}
\label{tab4}
\footnotesize
\centering
\begin{tabular}{@{}c@{\hskip.8ex}
*{3}{@{\hskip1ex}c@{\hskip1ex}}@{\hskip1ex}
*{3}{@{\hskip1ex}c@{\hskip1ex}}@{\hskip1ex}
*{3}{@{\hskip1ex}c@{\hskip1ex}}@{\hskip1ex}
*{3}{@{\hskip1ex}c@{\hskip1ex}}@{\hskip1ex}
*{3}{@{\hskip1ex}c@{}}}
\toprule
$\epsilon_{\ub,p}$ & \multicolumn{3}{c}{$\tau =1$} &
\multicolumn{3}{c}{$\tau =10^{-1}$} & \multicolumn{3}{c}{$\tau =10^{-2}$}
& \multicolumn{3}{c}{$\tau =10^{-4}$} & \multicolumn{3}{c}{$\tau
=10^{-6}$}\\
\midrule
Grid & A & B & C & A & B & C & A & B & C & A & B & C & A & B & C\\
\midrule
$\rho =1$ & $1.14$ & $0.84$ & $0.69$ & $0.91$ & $0.72$ & $0.51$ & $0.87$
& $0.67$ & $0.48$ & $0.85$ & $0.64$ & $0.47$ & $0.85$ & $0.64$ & $0.47$\\
$\rho =10^{-1}$ & $1.14$ & $0.84$ & $0.69$ & $0.92$ & $0.74$ & $0.52$
& $0.89$ & $0.71$ & $0.52$ & $0.88$ & $0.71$ & $0.58$ & $0.88$ & $0.71$
& $0.58$\\
$\rho =10^{-2}$ & $1.14$ & $0.84$ & $0.69$ & $0.92$ & $0.74$ & $0.52$
& $0.89$ & $0.72$ & $0.53$ & $0.90$ & $0.75$ & $0.63$ & $0.90$ & $0.74$
& $0.63$\\
$\rho =10^{-4}$ & $1.14$ & $0.84$ & $0.69$ & $0.92$ & $0.74$ & $0.52$
& $0.89$ & $0.72$ & $0.53$ & $0.90$ & $0.75$ & $0.64$ & $0.90$ & $0.75$
& $0.64$\\
$\rho =10^{-6}$ & $1.14$ & $0.84$ & $0.69$ & $0.92$ & $0.74$ & $0.52$
& $0.89$ & $0.72$ & $0.53$ & $0.90$ & $0.75$ & $0.64$ & $0.90$ & $0.75$
& $0.64$\\
\bottomrule
\end{tabular}
\end{table}

\section{Conclusion}\label{sec7}

We provide a new finite volume method for Biot's equations. The method
is distinguished by the following properties:
\begin{itemize}
\item The variables are cell-centered for both displacement and pressure,
allowing for sparse linear systems and efficient data structure. This
is in contrast to industry-standard discretizations using staggered grids.
\item The discretization is valid for general grids in 2D and 3D,
with only local constraints on grid and parameters. Numerical examples
indicate that these constraints are mild.
\item The discretization is naturally stable in the sense of
Definition~\ref{def1.1}.
\end{itemize}

For the proposed discretization, we provide stability analysis and
convergence proof utilizing the framework of hybrid finite volume
methods. The analysis exploits techniques from variational multiscale
methods and stabilized mixed finite element methods, and does not rely
on assumptions of asymptotic grid regularity.

Finally, we provide numerical evidence justifying the stability
analysis. The numerical examples indicate that for rough grids
($h$-perturbed), a general $2^\text{nd}$-order convergence of the
scheme in terms of primary variables (displacement and pressure). The
convergence of secondary variables (traction and normal fluxes) is
in general $1^\text{st}$ order. In the limit where $\tau \to 0$, the
convergence rate of pressure is reduced to $1^\text{st}$ order, while
the convergence of fluxes is lost. This is consistent with the expected
regularity of the solution.

\subsection*{Acknowledgements}
The author is currently associated with the Norwegian Academy of Science
and Letters through VISTA -- a basic research program funded by Statoil.

\bibliographystyle{siam}
\bibliography{biblio}

\end{document}